\numberwithin{equation}{section}
\numberwithin{figure}{section}
\theoremstyle{plain}
\newtheorem{thm}{\protect\theoremname}[section]
\theoremstyle{plain}
\newtheorem{proposition}[thm]{Proposition}
\newtheorem{corollary}[thm]{Corollary}
\theoremstyle{definition}
\theoremstyle{plain}
\newtheorem{lem}[thm]{\protect\lemmaname}
\newtheorem{lemma}[thm]{Lemma}
\theoremstyle{remark}
\newtheorem{rem}[thm]{\protect\remarkname}
\theoremstyle{definition}
\newtheorem{definition}[thm]{Definition}
\newtheorem*{claim*}{Claim}
\theoremstyle{remark}
\theoremstyle{definition}
\newtheorem*{defn*}{\protect\definitionname}
\providecommand{\definitionname}{Definition}
\providecommand{\lemmaname}{Lemma}
\providecommand{\propositionname}{Proposition}
\providecommand{\remarkname}{Remark}
\providecommand{\theoremname}{Theorem}
\newcommand{\Rmnum}[1]{\expandafter\@slowromancap\romannumeral #1@}
\newcommand{\A}{{\mathcal A}}
\newcommand{\M}{{\mathcal M}}
\newcommand{\N}{{\mathcal N}}
\newcommand{\8}{\infty}
\newcommand{\be}{\begin{eqnarray*}}
	\newcommand{\ee}{\end{eqnarray*}}
\newcommand{\beq}{\begin{equation}}
	\newcommand{\eeq}{\end{equation}}
\newcommand{\beqn}{\begin{equation*}}
	\newcommand{\eeqn}{\end{equation*}}
\newcommand{\bs}{\begin{split}}
	\newcommand{\es}{\end{split}}
\begin{document}

	
	
	\title[Noncommutative
	Littlewood-Paley-Stein inequalities]{On the best constants of the noncommutative Littlewood-Paley-Stein inequalities}
	
	\thanks{{\it 2020 Mathematics Subject Classification:} Primary: 46L52, 47D03. Secondary: 42B25, 47D07}
	\thanks{{\it Key words:} noncommutative symmetric diffusion semigroup, Littlewood-Paley-Stein inequality,  square functions, optimal order}
	
	\author[Zhenguo Wei]{Zhenguo Wei}
	\address{
		Laboratoire de Math{\'e}matiques, Universit{\'e} de Bourgogne Franche-Comt{\'e}, 25030 Besan\c{c}on Cedex, France}
\email{zhenguo.wei@univ-fcomte.fr}

	\author[Hao Zhang]{Hao Zhang}
	\address{
	Department of Mathematics, University of Illinois Urbana-Champaign, USA}
	\email{hzhang06@illinois.edu}
	\date{}
	\maketitle
	
	\begin{abstract}
		Let $1<p<\8$. Let $\{T_t\}_{t>0}$ be a noncommutative symmetric diffusion semigroup on a semifinite von Neumann algebra $\mathcal{M}$, and let $\{P_t\}_{t>0}$ be its associated subordinated Poisson semigroup. The celebrated noncommutative Littlewood-Paley-Stein inequality asserts that for any $x\in L_p(\M)$,
		\begin{equation*}
		\alpha_p^{-1}\|x\|_{p}\le \|x\|_{p,P}\le \beta_p \|x\|_{p},
		\end{equation*}
		where $\|\cdot\|_{p,P}$ is the $L_p(\M)$-norm of square functions associated with  $\{P_t\}_{t>0}$, and $\alpha_p, \beta_p$ are the best constants only depending on $p$.
		
		We show that as $p\to \8$,
		$$ \beta_p\lesssim p,  $$
		and $p$ is the optimal possible order of $\beta_p$ as well. We also obtain some lower and upper bounds of $\alpha_p$ and $\beta_p$ in the other cases.
	\end{abstract}
	
	
\section{Introduction}\label{Introduction}

 This article follows the recent investigation of the vector-valued Littlewood-Paley-Stein theory developed by Xu. In his remarkable paper \cite{X}, he generalized the vector-valued Littlewood-Paley-Stein inequalities of symmetric Markovian semigroups to a more general class of semigroups of regular contractions. The most interesting aspect is that his new approach often yields the optimal orders of the relevant constants in most cases.

At first, we recall the classical Littlewood-Paley-Stein theory discovered by Stein in \cite{S}. Let $(\Omega,\mathcal{B},\mu)$ be a $\sigma$-finite measure space. $\{T_t\}_{t>0}$ is a symmetric diffusion semigroup if $\{T_t\}_{t>0}$ satisfies the following conditions:
\begin{enumerate}
	\item $T_t$ is a contraction on $L_p(\Omega)$ for every $1\le p\le \infty$,
	\item $T_tT_s=T_{t+s}$,
	\item $\lim\limits_{t\to0}T_t(f)=f$ in $L_2(\Omega)$ for every $f\in L_2(\Omega)$,
	\item $T_t$ is positive (i.e. positivity preserving),
	\item $T_t$ is selfadjoint on $L_2(\Omega)$,
	\item $T_t(1)=1$.
\end{enumerate}
Let $\{P_t\}_{t>0}$ be the Poisson semigroup subordinated to $\{T_t\}_{t>0}$ defined as follows
\begin{equation}\label{Ptdef}
	P_t(f)=\frac{1}{\sqrt{\pi}}\int_0^\infty \frac{e^{-s}}{\sqrt{s}}T_{\frac{t^2}{4s}}(f) ds, \quad \forall \ f\in L_p(\Omega),\,\,1\leq p\leq\infty.
\end{equation}
Then $\{P_t\}_{t>0}$ is also a symmetric diffusion semigroup. The square function associated with $\{P_t\}_{t>0}$, denoted by $G^{P}(f)$, is defined as
\begin{equation*}
G^{P}(f)=\biggl(\int_0^\infty \biggl|t\frac{\partial}{\partial t}P_t(f)\biggr|^2\frac{dt}{t}\biggr)^{1/2}, \quad \forall \ f\in L_p(\Omega).
\end{equation*}
Let $1<p<\infty$. The famous Littlewood-Paley-Stein inequality states that 
	\begin{equation}\label{Ptclass}
		a_p^{-1}\|f-\mathrm{F}(f)\|_{L_p(\Omega)}\le \|G^{P}(f)\|_{L_p(\Omega)}\le b_p\|f-\mathrm{F}(f)\|_{L_p(\Omega)}, \quad \forall \ f\in L_p(\Omega).
	\end{equation}
	where $\mathrm{F}$ is the projection from $L_p(\Omega)$ onto the fixed point subspace of $\{P_t\}_{t>0}$ of $L_p(\Omega)$, and $a_p, b_p$ are the corresponding best constants only depending on $p$.

Stein showed \eqref{Ptclass} by virtue of Burkholder-Gundy's inequality and Rota's dilation, which reveals a profound relationship between semigroup theory and martingale theory. However, Stein's proof does not yield the optimal orders of the best constants $a_p,b_p$. We refer the interested reader to \cite[Chapter 4, 5]{S} for more details.

Recently in \cite{X}, Xu proved the following estimates of $a_p$ and $b_p$:
\begin{equation}\label{apbp}
	a_p\lesssim \max\{p,{p'}^{\frac{1}{2}}\} \quad \text{and} \quad b_p\lesssim \max\{p^{\frac{1}{2}},p'\},
\end{equation}
where $p'=\frac{p}{p-1}$ is the conjugate exponent of $p$. Xu's new approach is built on holomorphic functional calculus and transference. More specifically, he first transferred a general semigroup to the special one of the translation group via Fendler's dilation. Then he exploited Calder\'{o}n-Zygmund theory, Wilson's intrinsic square functions and the weighted inequality theory to derive the estimates of the square functions associated with the translation group.  We would like to remark that the vector-valued Littlewood-Paley-Stein theory should be employed to obtain (\ref{apbp}), even though $a_p$ and $b_p$ are involved in the scalar setting.

Moreover, Xu also verified in \cite{X} that the above estimates of $b_p$ is optimal as $p\to 1$ and $p\to \infty$ when $\{P_t\}_{t>0}$ is the classical Poisson semigroup on $\mathbb{R}$. However, the optimal order of $a_p$ seems much more difficult to be determined. For example, when $\{P_t\}_{t>0}$ is the classical Poisson semigroup on $\mathbb{R}$, Xu showed in \cite{Xu2022} that 
$$ \sqrt{p}\lesssim a_p\lesssim p, \quad \forall \ 1\leq p<\8.  $$

In \cite{XZ}, the second named author with the coauthor demonstrated that $a_p\approx p$ is optimal as $p\to \infty$ for symmetric diffusion semigroups. Their argument is based on the construction of a special symmetric diffusion semigroup $\{T_t\}_{t>0}$ associated with any given martingale such that its square function $G^T(f)$ for any $f\in L_p(\Omega)$ is pointwise comparable with the martingale square function of $f$. Hence they derived the lower bound of $a_p$ from the best constants involved in the Burkholder-Gundy inequality for martingales. 

Then it still remains an open question to determine the optimal order of $a_p$ as $p\to \8$ when $\{P_t\}_{t>0}$ is the classical Poisson semigroup, and to determine the optimal order of $a_p$ as $p\to 1$ for any symmetric diffusion semigroup.

\

Motivated by the aforementioned work, we are concerned with the optimal orders of the best constants for the noncommutative Littlewood-Paley-Stein inequalities. First of all, we recall the noncommutative symmetric diffusion semigroups and the corresponding noncommutative Littlewood-Paley-Stein inequalities.

Let $\mathcal{M}$ be a von Neumann algebra equipped with a normal semifinite faithful trace $\tau$. $\{T_t\}_{t>0}$ is a noncommutative symmetric diffusion semigroup if it satisfies the following conditions:
\begin{enumerate}[(a)]
	\item each $T_t:\mathcal{M}\to\mathcal{M}$ is unital, normal and completely positive,
	\item for any $x\in\mathcal{M}$, $T_t(x)\to x$ in the $w^*$-topology of $\mathcal{M}$ when $t\to 0^+$,
	\item each $T_t:\mathcal{M}\to\mathcal{M}$ is selfadjoint. Namely, for any $x,y\in \mathcal{S}(\mathcal{M})$
	\begin{equation*}
		\tau(T_t(x)y)=\tau(xT_t(y)),
	\end{equation*} 
    \item the extension of each $T_t:\mathcal{M}\to\mathcal{M}$ from $L_p(\mathcal{M})$ to $L_p(\mathcal{M})$ for $1\le p<\infty$ is completely contractive.
\end{enumerate}
It is well-known that $\{T_t\}_{t>0}$ extends to a semigroup of completely positive contractions on $L_p(\mathcal{M})$ for any $1\le p<\infty$ by \cite[Lemma 1.1]{JX2}. Moreover, $\{T_t\}_{t>0}$ is strongly continuous on $L_p(\mathcal{M})$ for any $1\le p<\infty$ by \cite[Proposition 1.23]{D} or \cite{JX2}.

Let $1<p<\8$. For any given $x\in L_p(\mathcal{M})$, we define the column and row square functions associated with $\{T_t\}_{t>0}$ respectively by
	\begin{equation*}
		G_c^{T}(x)=\biggl(\int_0^\infty \big|t\frac{\partial}{\partial t}T_t(x)\big|^2\frac{dt}{t}\biggr)^{1/2}
	\end{equation*}
	and
    \begin{equation*}
    	G_r^{T}(x)=\biggl(\int_0^\infty \bigl|t\frac{\partial}{\partial t} T_t(x)^*\bigr|^2\frac{dt}{t}\biggr)^{1/2}.
    \end{equation*}
Furthermore, the $p$-norm of $x\in L_p(\mathcal{M})$ for square functions of noncommutative semigroups, denoted by $\|\cdot\|_{p,T}$, is defined as follows: 
\begin{enumerate}
	\item if $2\le p<\infty$,
	\begin{equation*}
		\|x\|_{p,T}=\max \Big\{\|G_c^T(x)\|_{p},\|G_r^T(x)\|_{p}\Big\},
	\end{equation*}
    \item if $1< p< 2$,
    \begin{equation*}
    	\begin{aligned}
    		\|x\|_{p,T}{}&=\inf \Big\{ \|G_c^T(y)\|_{p}+\|G_r^T(z)\|_{p}:x=y+z \Big\}.
    	\end{aligned}
    \end{equation*}
\end{enumerate}

In \cite{JX}, Junge, Le Merdy and Xu showed the following noncommutative Littlewood-Paley-Stein inequality: if $1<p<\infty$ and $\{T_t\}_{t>0}$ is a noncommutative symmetric diffusion semigroup, then
\begin{equation}\label{nlps}
	\alpha_p^{-1}\|x\|_{p}\le \|x\|_{p,T}\le \beta_p \|x\|_{p},
\end{equation}
where $\alpha_p,\beta_p$ are the corresponding best constants only depending on $p$. (We assume that $A$ has dense range where $T_t=\exp(-t A)$ $({t>0})$ on $L_p(\M)$, see Remark \ref{rem217}.) Let $\{P_t\}_{t>0}$ be the subordinated Poisson semigroup associated with $\{T_t\}_{t>0}$. Then $\{P_t\}_{t>0}$ is also a noncommutative symmetric diffusion semigroup on $\M$. Hence, (\ref{nlps}) holds for $\{P_t\}_{t>0}$ as well.

In this article, we aim to obtain the optimal orders of magnitude on $p$ of best constants $\alpha_p,\beta_p$ in (\ref{nlps}) in terms of the subordinated Poisson semigroup $\{P_t\}_{t>0}$. The following theorem is our main result.
\begin{thm}\label{main1}
	Let $1< p<\infty$. Suppose that $\{T_t\}_{t>0}$ is a noncommutative symmetric diffusion semigroup on $\mathcal{M}$ and $\{P_t\}_{t>0}$  its subordinated Poisson semigroup. Let $\alpha_p,\beta_p$ be the best positive constants such that
	\begin{equation*}
		\alpha_p^{-1}\|x\|_{p}\le\|x\|_{p,P}\le \beta_p \|x\|_{p},\quad \forall \ x\in L_p(\mathcal{M}).
	\end{equation*}
Then we have 
$$ p\lesssim \alpha_p\lesssim \max\{p^2,p'\} \quad \text{and } \quad \max\{p,p'\}\lesssim\beta_p\lesssim \max\{p,p'^3\}. $$
Moreover, $p$ is the optimal possible order of $\beta_p$ as $p\to\infty$. 
\end{thm}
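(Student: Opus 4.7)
My plan is to prove the four estimates on $\alpha_p,\beta_p$ separately: Xu's functional-calculus scheme, adapted to the noncommutative setting, for the upper bounds, and explicit constructions for the lower bounds. The optimality $\beta_p\approx p$ as $p\to\8$ will come from matching the upper bound with a lower bound extracted from a semigroup associated with a noncommutative martingale.

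\textbf{Upper bounds.} For $\beta_p$, I would first apply a noncommutative Fendler/Rota-type dilation to realize $\{P_t\}_{t>0}$ as a compression of a group of trace-preserving $*$-automorphisms on a larger semifinite von Neumann algebra. Since the subordinated semigroup is analytic in a sector of angle strictly less than $\pi/2$, its generator admits a bounded $H^\8$-functional calculus whose norm I would track via noncommutative Calder\'on--Zygmund theory and the interpolation $[L_\8(\M),L_2(\M)]_{2/p}=L_p(\M)$, with endpoint estimate $\|G_c^P(x)\|_{\mathrm{BMO}_c(\M)}\lesssim \|x\|_\8$; this gives $\|G_c^P(x)\|_p\lesssim p\|x\|_p$ for $p\ge 2$, and the row bound follows by adjoints. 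For $1<p<2$ the infimum decomposition $x=y+z$ together with a duality argument leave two additional $p'$ factors, producing $\beta_p\lesssim p'^3$. The upper bound $\alpha_p\lesssim \max\{p^2,p'\}$ is then obtained by polarizing
\begin{equation*}
\tau(xy^*) = 4\int_0^\8 \tau\bigl(t\pa_t P_t(x)\,(t\pa_t P_t(y))^*\bigr)\,\tfrac{dt}{t},\quad x,y\in L_2(\M)\cap L_p(\M),
\end{equation*}
and carefully tracking the row/column split in $\|\cdot\|_{p',P}$; the extra $p$ factor compared with the commutative case reflects the simultaneous cost of handling column and row square functions.

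\textbf{Lower bounds and optimality.} For $\max\{p,p'\}\lesssim \beta_p$ and $p\lesssim \alpha_p$, I would adapt the construction of \cite{XZ} to the noncommutative setting: given a noncommutative filtered martingale, one builds an auxiliary symmetric diffusion semigroup on a suitable enlargement of $\M$ whose subordinated Poisson column square function is pointwise comparable to the martingale column square function. The known sharp lower bounds for the noncommutative martingale Burkholder--Gundy inequality then transfer to give $\beta_p\gtrsim p$ and $\alpha_p\gtrsim p$ as $p\to\8$, and $\beta_p\gtrsim p'$ as $p\to 1$ by duality. Combined with the upper bound $\beta_p\lesssim p$ obtained above, this yields the optimality $\beta_p\approx p$ as $p\to\8$.

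\textbf{Main obstacle.} The technical heart is the noncommutative endpoint BMO estimate $\|G_c^P(x)\|_{\mathrm{BMO}_c(\M)}\lesssim \|x\|_\8$ underlying $\beta_p\lesssim p$. In the noncommutative setting the $\mathrm{BMO}_c$ norm mixes semigroup averages and conditional expectations in a way that must be controlled uniformly over all symmetric diffusion semigroups, and only a careful execution of the Mei/Junge--Mei interpolation machinery keeps the interpolation constant linear in $p$ rather than the $p^{3/2}$ or $p^2$ that a naive argument would produce. Once this endpoint is established, the remaining bounds follow from the dualization above, standard noncommutative row/column Khintchine and Burkholder tools, and the explicit martingale construction.
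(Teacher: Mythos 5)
Your lower-bound strategy is essentially the one in the paper: adapt the construction of \cite{XZ} (a conditional-expectation semigroup $\widetilde{T}_t=\sum_n(1-a_{n-1})^t\,d\mathcal{E}_n$ whose subordinated Poisson square function is comparable to the martingale square function) and read off $\alpha_p\gtrsim p$ and $\beta_p\gtrsim\max\{p,p'\}$ from the optimal constants in Pisier--Xu's noncommutative Burkholder--Gundy inequality due to Junge--Xu and Randrianantoanina. That part of the plan is sound. The duality step computing $\alpha_p$ via $x=4\int_0^\8 F(t\sqrt{A})^2(x)\,\tfrac{dt}{t}$ with $F(z)=-ze^{-z}$ is also the paper's mechanism.

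However, there is a genuine gap in the upper bound $\beta_p\lesssim p$ for $p\ge 2$, which is the technically new part of the theorem. You propose to run it through a semigroup BMO endpoint $\|G_c^P(x)\|_{\mathrm{BMO}_c(\M)}\lesssim\|x\|_\8$ followed by an interpolation with $L_2(\M)$. This is the $H^\infty$-calculus/BMO route of Junge--Le Merdy--Xu \cite{JX} and \cite{GSX}, and it is precisely the one the paper is improving upon: that method does not keep the interpolation constant linear in $p$ uniformly over all symmetric diffusion semigroups, because the semigroup BMO space has no ambient doubling geometry to run a Calder\'on--Zygmund decomposition in, and one pays extra factors when passing from $[\mathrm{BMO}_c,L_2]$ to $L_p$. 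Your ``Main obstacle'' paragraph correctly identifies this issue, but offers no concrete way around it, and ``only a careful execution \dots keeps the interpolation constant linear in $p$'' is the entire theorem rather than a proof strategy.

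The idea you are missing is transference to a \emph{concrete} translation group, not merely to an abstract group of trace-preserving automorphisms. After dilating $\{T_t\}$ to a strongly continuous group $\{U_t\}$ of completely positive isometries on $L_p(\N)$ (the paper does this via the Haagerup--Musat factorizability theorem and Arhancet's semigroup dilation, which is the correct substitute for Fendler/Rota here), the paper writes $t\pa_t P_t=\int_0^\8\varphi(s^{-1/2})M_{t^2 s}\,\tfrac{ds}{s}$ in terms of the ergodic averages $M_t$ of $\{T_t\}$, and exploits the fact that for a \emph{group} of isometries the quantity $\|G_c^P(x)\|_p$ is unchanged under $T_u$, which lets one majorize it by the corresponding square function of the translation group $\{\tau_t\}$ acting on $L_p(\mathbb{R},L_p(\M))$, applied to the truncated orbit $u\mapsto\chi_{(0,ab^2+c]}(u)T_u(x)$. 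In that Euclidean setting the operator $f\mapsto\{\phi_t\ast f\}_{t>0}$ is an honest $L_2(\mathbb{R}_+,\tfrac{dt}{t})$-valued Calder\'on--Zygmund operator, and the semicommutative singular integral theorem of Mei--Parcet \cite{MP} (Theorem \ref{singular}) delivers $\|Tf\|_{L_p(\A;H^{r\cap c})}\lesssim p\|f\|_{L_p(\A)}$ with a genuine CZ decomposition on $\mathbb{R}$, not a semigroup BMO. Letting $c\to\8$, $a\to\8$, $b\to\8$ in that order then gives $\|x\|_{p,P}\lesssim p\|x\|_p$. Without this reduction to a fixed Euclidean kernel, the BMO-interpolation route as you describe it does not yield the sharp order, and indeed was the source of the weaker $(p')^6$ bound the paper improves. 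For $1<p<2$ the paper does not use the $L_\8$--$L_2$ endpoint at all; it proves that $\sqrt{A}$ is Col/Row-sectorial of type $\tfrac{\pi}{4}$ with constant $\lesssim\max\{p,p'\}$ (via Col/Row-boundedness of the ergodic averages and the noncommutative maximal inequality), and combines $\|x\|_{p,P}\le 2\max\{\|Q_c\|,\|Q_r\|\}[x]_{p,P}$, $\|Q_c\|,\|Q_r\|\lesssim p'$, with the known $\widetilde\beta_p\lesssim (p')^2$ to get $\beta_p\lesssim(p')^3$; this is a specific functional-calculus argument rather than the vague ``infimum decomposition plus duality'' you sketch.
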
	


It has been shown that $\beta_p\lesssim (p')^6$ as $p\to 1$ in \cite{GSX}. So we improve their result to $(p')^3$ as $p\to 1$. When $\M=L_\8(\Omega)$, a noncommutative symmetric diffusion semigroup is a classical symmetric diffusion semigroup. Then $\alpha_p\geq a_p$ and $\beta_p\geq b_p$. In addition, it is easy to deduce from (\ref{apbp}) and \cite[Corollary 1.6]{XZ} that
$$ \alpha_p\gtrsim p \quad \text{and} \quad \beta_p\gtrsim\max\{p^{\frac{1}{2}},p'\}.  $$
However, from Theorem \ref{main1}, we see $\beta_p\approx p$ as $p\to \8$, which reflects the difference between the best constants in the noncommutative setting and those in the classical setting. A detailed discussion about the lower bounds of $\alpha_p$ and $\beta_p$ is included in Corollary \ref{lower}.

When $1<p<2$, there is an alternative $p$-norm denoted by $[\,\cdot\,]_{p,T}$  of $x\in L_p(\mathcal{M})$ for semigroup square functions by letting
\begin{equation*}
	\begin{aligned}
		[x]_{p,T}=\inf \Big\{ \|u_1\|_{L_p(\mathcal{M};(L_2(\mathbb{R}_+,\frac{dt}{t}))^{c})}&+\|u_2\|_{L_p(\mathcal{M};(L_2(\mathbb{R}_+,\frac{dt}{t}))^r)}:\\
		&u_1(t)+u_2(t)=-t\frac{\partial}{\partial t}T_t(x),\,\,\forall \ t\in\mathbb{R}_+ \Big\}.
	\end{aligned}
\end{equation*}
It is obvious that $[x]_{p,T}\leq  \|x\|_{p, T}$. In particular, if $\M=L_\8(\Omega)$, then $[x]_{p,T}=\|x\|_{p, T}$. 

From \cite[Theorem 7.8]{JX}, if $\{T_t\}_{t>0}$ is a noncommutative symmetric diffusion semigroup on $\mathcal{M}$, then for $1<p<2$ and any $x\in L_p(\M)$,
$$ \|x\|_{p, T}\approx_p [x]_{p,T}.  $$
Let $1<p<2$. Let $\widetilde{\alpha}_p, \widetilde{\beta}_p$ be the best constants such that
$$ \widetilde{\alpha}_p^{-1}\|x\|_{p}\le[x]_{p,P}\le \widetilde{\beta}_p \|x\|_{p},\quad \forall \ x\in L_p(\mathcal{M}).$$
Then $\widetilde{\alpha}_p\geq \alpha_p$. From \cite[Section 10.C-10.D]{JX} or \cite[Proposition 2.10]{GSX}, one has
\begin{equation}\label{ppppp}
 \widetilde{\beta}_p\lesssim p'^2. 
 \end{equation}
 
 \begin{corollary}\label{widealpha}
 	When $1<p<2$, we have
 	$$\widetilde{\alpha}_p\lesssim p'.$$
 	\end{corollary}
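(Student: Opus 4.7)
The plan is to prove $\widetilde{\alpha}_p \lesssim p'$ by duality: the desired lower bound $\|x\|_p \lesssim p'\,[x]_{p,P}$ should be dual to the upper bound $\|y\|_{p',P} \lesssim p'\,\|y\|_{p'}$ for $p'>2$, which is exactly what Theorem~\ref{main1} already provides (since $\beta_{p'}\lesssim\max\{p',p^3\}\lesssim p'$ for $1<p<2$). The two sides will be paired by a Calder\'on-type reproducing formula for $\{P_t\}_{t>0}$.

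First, I would establish the reproducing formula. Writing $P_t=e^{-tB}$ where $B$ is the infinitesimal generator on $L_2(\M)$ (injective with dense range by the standing assumption recalled around Remark~\ref{rem217}), the scalar identity
\begin{equation*}
\int_0^\infty t^2\lambda^2 e^{-2t\lambda}\,\frac{dt}{t}=\frac14,\qquad \lambda>0,
\end{equation*}
combined with selfadjointness of $P_t$, yields for $x,y$ in a dense subspace
\begin{equation*}
\tau(y^*x) = 4\int_0^\infty\tau\bigl((t\partial_tP_ty)^*(t\partial_tP_tx)\bigr)\,\frac{dt}{t}.
\end{equation*}
Making this rigorous on $L_p(\M)$ requires truncating the $t$-integral to $[\varepsilon,1/\varepsilon]$, using strong continuity of $\{P_t\}_{t>0}$ on $L_p$ and the vanishing of the fixed-point projection on $\overline{\mathrm{ran}(B)}$, and passing to the limit as $\varepsilon\to 0$; this is the main technical point of the argument.

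Second, I would exploit the duality between $L_p(\M;(L_2)^c)+L_p(\M;(L_2)^r)$ and $L_{p'}(\M;(L_2)^c)\cap L_{p'}(\M;(L_2)^r)$, where $L_2=L_2(\mathbb{R}_+,\frac{dt}{t})$. For an arbitrary admissible decomposition $-t\partial_tP_tx = u_1(t)+u_2(t)$ with $u_1\in L_p(\M;(L_2)^c)$ and $u_2\in L_p(\M;(L_2)^r)$, set $v(t):=-t\partial_tP_ty$ and split the integral accordingly; the H\"older inequalities for column and row $L_p$-valued spaces give
\begin{align*}
|\tau(y^*x)|&\le 4\|v\|_{L_{p'}(\M;(L_2)^c)}\|u_1\|_{L_p(\M;(L_2)^c)}+4\|v\|_{L_{p'}(\M;(L_2)^r)}\|u_2\|_{L_p(\M;(L_2)^r)}\\
&\le 4\|y\|_{p',P}\bigl(\|u_1\|_{L_p(\M;(L_2)^c)}+\|u_2\|_{L_p(\M;(L_2)^r)}\bigr),
\end{align*}
where the second step uses $\|v\|_{L_{p'}(\M;(L_2)^c)}=\|G_c^P(y)\|_{p'}$ and the row analogue, both bounded by $\|y\|_{p',P}$ since $p'>2$. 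Taking the infimum over decompositions yields $|\tau(y^*x)|\le 4\|y\|_{p',P}\,[x]_{p,P}$.

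Finally, invoking Theorem~\ref{main1} at the exponent $p'>2$ gives $\|y\|_{p',P}\le\beta_{p'}\|y\|_{p'}\lesssim p'\,\|y\|_{p'}$, and taking the supremum over $y$ with $\|y\|_{p'}\le 1$ produces $\|x\|_p\lesssim p'\,[x]_{p,P}$ on the dense subspace, hence on all of $L_p(\M)$ by density, so that $\widetilde{\alpha}_p\lesssim p'$ as claimed.
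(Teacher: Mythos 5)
Your proposal is correct and follows essentially the same route as the paper: you use the Calder\'on reproducing identity for $\{P_t\}_{t>0}$ (the paper deploys \eqref{xfA} with $F(z)=-ze^{-z}$), apply the column/row H\"older duality to an arbitrary admissible decomposition $u_1+u_2$ to get $|\langle x,y\rangle|\le 4\|y\|_{p',P}[x]_{p,P}$, take the infimum, and then invoke $\beta_{p'}\lesssim p'$ from the $p'\geq 2$ case of Theorem~\ref{main1}. The only cosmetic difference is that you phrase the duality with the sesquilinear pairing $\tau(y^*x)$ (pairing column with column), whereas the paper uses the bilinear convention of \eqref{duality1} (pairing column with row, so $\|G_r^P(y)\|_{p'}$ appears against $u_1$ and $\|G_c^P(y)\|_{p'}$ against $u_2$); both estimates are bounded by $\|y\|_{p',P}$ and give the same conclusion.
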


 The estimate of $\widetilde{\alpha}_p$ is implied by Theorem \ref{main1} and duality, which we will demonstrate in Section \ref{est2}. Corollary \ref{widealpha} also yields $\alpha_p\lesssim p'$ as $p\to 1$.

The paper is organized as follows. Section \ref{Preliminaries} provides an overview of notation and background, including noncommutative $L_p$-spaces, noncommutative symmetric diffusion semigroups and dilatable semigroups, noncommutative Burkholder-Gundy inequality and holomorphic functional calculus. Section \ref{est1} is devoted to estimating $\alpha_p$ and $\beta_p$ for $2\le p<\infty$. We mainly adopt Xu's approach in \cite{X}. To be more specific, by utilizing the dilatable property of $\{T_t\}_{t>0}$, we dilate our semigroup $\{T_t\}_{t>0}$ into a group of isometries. We then use the transference method to reduce our problem to the case of the translation group, and ultimately apply the semicommutative singular integral operator theory known in \cite{MP} to get the desired result. In section \ref{est2}, we employ the methods outlined in \cite{JX} and \cite{X} to estimate $\alpha_p,\widetilde{\alpha}_p$ and $\beta_p$. The key of these ingredients is to prove that $\sqrt{A}$, where $A$ is the negative infinitesimal generator of $\{T_t\}_{t>0}$, is Col-sectorial of Col-type $\frac{\pi}{4}$ and Row-sectorial of Row-type $\frac{\pi}{4}$ (see Proposition \ref{mainlemma}).

Throughout this paper, we shall use the following notation: $A\lesssim B$ (resp. $A\lesssim_\varepsilon B$) means that $A\le CB$ (resp. $A\le C_\varepsilon B$) for some absolute positive constant $C$ (resp. a positive constant $C_\varepsilon$ depending only on $\varepsilon$). $A\approx B$ or $A\approx_\varepsilon B$ means that these inequalities as well as their inverses hold.	Denote by $p'$ the conjugate exponent of $p$. For simplicity, we use the abbreviation $\partial =\frac{\partial}{\partial t}$.

\bigskip

\section{Preliminaries}\label{Preliminaries}
\subsection{\textbf{Noncommutative $L_p$-spaces.}}
Let $\mathcal{M}$ be a von Neumann algebra equipped with a normal semifinite faithful trace $\tau$. Denote by $\M_+$ the positive part of $\M$. Let $\mathcal{S}_+(\M)$ be the set of all $x\in \M_+$ whose support projection has a finite trace, and $\mathcal{S}(\mathcal{M})$ be the linear span of $\mathcal{S}_+(\mathcal{M})$. Then $\mathcal{S}(\mathcal{M})$ is a $w^*$-dense $*$-subalgebra of $\mathcal{M}$. Let $x\in\mathcal{S}(\mathcal{M})$, then $|x|^p\in\mathcal{S}(\mathcal{M})$ for any $0<p<\infty$, where $|x|:=(x^*x)^{1/2}$. Define
\[\|x\|_p=(\tau(|x|^p))^{1/p}.\]
Thus $\|\cdot\|_p$ is a norm for $p\ge 1$, and a $p$-norm for $0<p<1$. The noncommutative $L_p$-space denoted by $L_p(\mathcal{M},\tau)$ associated with $(\mathcal{M},\tau)$ is the completion of $(\mathcal{S}(\M),\|\cdot\|_p)$ for $0<p<\8$. We also write $L_p(\mathcal{M},\tau)$ simply by $L_p(\mathcal{M})$ for short. When $p=\8$, we set $L_\infty(\mathcal{M}):=\mathcal{M}$ equipped with the operator norm. In particular, when $p=2$, $L_2(\M)$ is a Hilbert space. We refer the reader to \cite{PX} for a detailed exposition of noncommutative $L_p$-spaces.

Now we present the tensor product of von Neumann algebras. Assume that each $\mathcal{M}_k$ $(k=1, 2)$ is equipped with a normal semifinite faithful trace $\tau_k$. Let $H_k=L_2\left(\mathcal{M}_k\right)$. We consider $\mathcal{M}_k$ as a von Neumann algebra acting on $H_k$ by left multiplication, which means $\M_k \hookrightarrow B(H_k)$ via the embedding $x\longmapsto L_x\in B(H_k)$, where $x\in \M_k$ and $L_x(y):= x\cdot y\in H_k$ for any $y\in H_k$. Then the tensor product of $\M_1$ and $\M_2$ denoted by $\mathcal{M}_1 \otimes \mathcal{M}_2$ is the $w^*$-closure of $\text{span}\{x_1\otimes x_2 | x_1\in \M_1, x_2\in \M_2\}$ in $B(L_2(\M_1)\otimes L_2(\M_2))$. Here $L_2(\M_1)\otimes L_2(\M_2)$ is the Hilbert space tensor product of $L_2(\M_1)$ and $ L_2(\M_2)$.

It is well-known that there exists a unique normal semifinite faithful trace $\tau$ on the von Neumann algebra tensor product $\mathcal{M}_1 \otimes \mathcal{M}_2$ such that
$$
\tau\left(x_1 \otimes x_2\right)=\tau_1\left(x_1\right) \tau_2\left(x_2\right), \quad \forall \ x_1 \in \mathcal{S}(\M_1),\,\, \forall \ x_2 \in \mathcal{S}(\M_2) .
$$
$\tau$ is called the tensor product of $\tau_1$ and $\tau_2$ and denoted by $\tau_1 \otimes \tau_2$.

Let $H$ be a Hilbert space. Now we define various $H$-valued noncommutative $L_p$-spaces. Let the von Neumann algebra $B(H)$ be equipped with its usual trace $\mathrm{Tr}$. We equip the von Neumann algebra $\mathcal{M}\otimes B(H)$ with the trace $\tau\otimes \mathrm{Tr}$. For any $a,b\in H$, define the rank one operator as follows
\[a\otimes  b(\xi)=\langle b,\xi\rangle a, \quad \forall \ \xi\in H.\]
We fix some $e\in H$ with $\|e\|=1$, and let $p_e=e\otimes e\in B(H)$ be the rank one orthogonal projection onto $\mathrm{span}\{e\}$. Then for any $0<p\le \infty$, define the following column spaces and row spaces respectively,
\begin{equation*}
	L_p(\mathcal{M};H^c)=L_p(\mathcal{M}\otimes B(H))(1\otimes p_e),
\end{equation*}
and 
\begin{equation*}
	L_p(\mathcal{M};H^r)=(1\otimes p_{e})L_p(\mathcal{M}\otimes B(H)).
\end{equation*}
This definition is essentially independent of the choice of $e$. We regard $L_p(\mathcal{M})$ as a subspace of $L_p(\mathcal{M}\otimes B(H))$ by identifying any $c\in L_p(\mathcal{M})$ with $c\otimes p_e$. This is equivalent to writing that
\begin{equation*}
	L_p(\mathcal{M})=(1\otimes p_e)L_p(\mathcal{M}\otimes B(H))(1\otimes p_e).
\end{equation*}
Then for any element $u\in L_p(\mathcal{M};H^c)$, $$u^*u\in (1\otimes p_e)L_{p/2}(\mathcal{M}\otimes B(H))(1\otimes p_e)\subset L_{p/2}(\mathcal{M}\otimes B(H)). $$
Due to the above identification for $p/2$, we may therefore see $u^*u$ as an element of $L_{p/2}(\mathcal{M})$, which implies that
\begin{equation}\label{ustar}
	\|u\|_{L_p(\mathcal{M};H^c)}=\|(u^*u)^{1/2}\|_{p}.
\end{equation}
Similarly, if $u\in L_p(\mathcal{M};H^r)$, then one has
\begin{equation}
\|u\|_{L_p(\mathcal{M};H^r)}=\|(uu^*)^{1/2}\|_{p}.
\end{equation}
Arguing as in \cite[Chapter 2]{JX}, we may use these identities to regard the algebraic tensor product $L_p(\mathcal{M}) \otimes_{alg} H$ as a dense subspace of $L_p(\mathcal{M};H^c)$ and $L_p(\mathcal{M};H^r)$. In particular, when $p=2$, it is clear that $L_2(\mathcal{M};H^c)=L_2(\mathcal{M};H^r)$. 

Let $1\le p,q\le \infty$ and $0<\theta<1$. If $\frac{1}{s}=\frac{1-\theta}{p}+\frac{\theta}{q}$, then we obtain
\begin{equation}\label{compinter}
	[L_p(\mathcal{M};H^c),L_q(\mathcal{M};H^c)]_{\theta}=L_s(\mathcal{M};H^c),
\end{equation}
where $[\,\,,\,\,]_\theta$ stands for the interpolation space obtained by the complex interpolation method (see \cite{BJLJ}). We would like to remark that column spaces $L_p\left(\mathcal{M} ; H^c\right)$ and row spaces $L_p\left(\mathcal{M} ; H^r\right)$ play symmetric roles in most cases, for instance, the interpolation property similar to (\ref{compinter}) also holds for row spaces $L_p\left(\mathcal{M} ; H^r\right)$. In the sequel, we will often state some results for $L_p\left(\mathcal{M} ; H^c\right)$ only and then take granted that they have also a row version, unless specified otherwise.

For $1\le p<\infty$, it is well-known that 
\begin{equation}\label{duality1}
	L_p(\mathcal{M};H^c)^*=L_{p'}(\mathcal{M};H^r),
\end{equation}
where the duality pairing is defined by taking $(y\otimes b,x\otimes a)$ to $\langle \overline{b},a\rangle \tau(yx)$ for any $x\in L_p(\mathcal{M})$, $y\in L_{p'}(\mathcal{M})$ and $a,b\in H$.

 Now we need to introduce two more $H$-valued noncommutative $L_p$-spaces, namely the intersection and the sum of row and column spaces. For $2\le p<\infty$, we define the intersection
\begin{equation*}
	L_p(\mathcal{M};H^{r\cap c})=L_p(\mathcal{M};H^c)\cap L_p(\mathcal{M};H^r),
\end{equation*}
equipped with the norm
\begin{equation*}
	\|u\|_{L_p(\mathcal{M};H^{r\cap c})}=\max \{\|u\|_{L_p(\mathcal{M};H^c)},\|u\|_{L_p(\mathcal{M};H^r)}\}.
\end{equation*}
Then for $1\leq p\leq 2$, we define the sum
\begin{equation*}
	L_p(\mathcal{M};H^{r+c})=L_p(\mathcal{M};H^c)+ L_p(\mathcal{M};H^r),
\end{equation*}
equipped with the norm
\begin{equation*}
	\|u\|_{L_p(\mathcal{M};H^{r+c})}=\inf \{\|u_1\|_{L_p(\mathcal{M};H^c)}+\|u_2\|_{L_p(\mathcal{M};H^r)}:u=u_1+u_2\}.
\end{equation*}
When $p=2$, it is easy to see that $	L_2(\mathcal{M};H^{r\cap c})=L_2(\mathcal{M};H^{r+c})$.

Let $1\le p<\infty$. In particular, if $H=\ell_2$ and $e=(\delta_{1i})_{1\leq i<\8}\in \ell_2$, then for any finite sequence $\{x_k\}_{k=1}^n\subset L_p(\mathcal{M})$, we have
\begin{equation*}
	\|\{x_k\}_{k=1}^n\|_{L_p(\mathcal{M};\ell_2^c)}=\Big\|\Big(\sum_{k=1}^n x_k^*x_k\Big)^{1/2}\Big\|_{p}
\end{equation*}
and
\begin{equation*}
	\|\{x_k\}_{k=1}^n\|_{L_p(\mathcal{M};\ell_2^r)}=\Big\|\Big(\sum_{k=1}^n x_kx_k^*\Big)^{1/2}\Big\|_{p}.
\end{equation*}
Then $L_p(\mathcal{M};\ell_2^c)$ (resp. $L_p(\mathcal{M};\ell_2^r)$) is the completion of the space of all finite sequences in $L_p(\mathcal{M})$ with respect to $\|\cdot\|_{L_p(\mathcal{M};\ell_2^c)}$ (resp. $\|\cdot\|_{L_p(\mathcal{M};\ell_2^c)}$). Moreover, one has
\begin{equation}\label{duality2}
	L_p(\mathcal{M};\ell_2^c)^*=L_{p'}(\mathcal{M};\ell_2^r) \quad \text{and} \quad L_p(\mathcal{M};\ell_2^r)^*=L_{p'}(\mathcal{M};\ell_2^c)
\end{equation}
isometrically with respect to the duality bracket:
\begin{equation*}
	\big\langle \{y_n\}_{n \in\mathbb{N}},\{x_n\}_{n\in\mathbb{N}} \big\rangle \mapsto \sum_{n\in\mathbb{N}} \tau(y_nx_n),\quad\quad \forall \ \{x_n\}_{n\in\mathbb{N}}\subset L_p(\mathcal{M}),\,\, \{y_n\}_{n\in\mathbb{N}}\subset L_{p'}(\mathcal{M}).
\end{equation*}
Besides, $L_p(\mathcal{M};\ell_2^{r\cap c})$ and $L_p(\mathcal{M};\ell_2^{r+c})$ are the intersection and the sum of $L_p(\mathcal{M};\ell_2^{c})$ and $L_p(\mathcal{M};\ell_2^{r})$ respectively.
We refer the interested reader to \cite{JX,P} for more information.

\

Let $(\Omega,\mu)$ be a $\sigma$-finite measure space, and let  $X$ be a Banach space. We now introduce vector-valued $L_p$-space $L_p(\Omega,X)$. For any $1\le p<\infty$, $L_p(\Omega,X)$ consists of all (strongly) measurable functions $u:\Omega\to X$ such that $\int_\Omega\|u(t)\|_{X}^p d\mu(t)$ is finite. The norm on this space is given by 
\[\|u\|_{L_p(\Omega,X)}=\biggl(\int_\Omega\|u(t)\|_{X}^p d\mu(t)\biggr)^{1/p},\quad \forall \ u\in L_p(\Omega,X).\]
For any $u\in L_p(\Omega,X)$ and $v\in L_{p'}(\Omega,X^*)$, the function $t\mapsto \langle v(t),u(t)\rangle$ is integrable, and we define a duality pairing
\begin{equation*}
	\langle v,u\rangle=\int_\Omega \langle v(t),u(t)\rangle d\mu(t),
\end{equation*}
which implies an isometric inclusion
\begin{equation*}
	L_{p'}(\Omega,X^*) \hookrightarrow L_p(\Omega,X)^*.
\end{equation*}
If further $X$ is reflexive, from \cite[Chapter IV]{DU} we obtain an isometric isomorphism $L_{p'}(\Omega,X^*)= L_p(\Omega,X)^*$.

When $X=L_p(\mathcal{M})$, it is easy to verify that $L_p(\Omega,L_p(\mathcal{M}))$ isometrically coincides with $L_p(L_\infty(\Omega)\otimes \mathcal{M})$.
We refer the reader to \cite{PX,JX} for more details.




\subsection{Dilatable semigroups}\label{Dilatable semigroup}
This subsection is devoted to showing that each noncommutative symmetric diffusion semigroup is dilatable with the help of the factorizable property for Markov maps. We begin with the definition of dilatable semigroups and Markov maps.
\begin{definition}
	Let $1<p<\infty$, and let $\mathcal{M}$ be a von Neumann algebra equipped with a normal semifinite faithful trace $\tau$. Suppose that $\{T_t\}_{t>0}$ is a positive contractive and strong continuous semigroup on $L_p(\mathcal{M})$. We say $\{T_t\}_{t>0}:L_p(\mathcal{M})\to L_p(\mathcal{M})$ is a dilatable semigroup, if there exists another semifinite von Neumann algebra $\mathcal{N}$, a strong continuous group $\{U_t\}_{t>0}$ of completely positive isometries on $L_p(\mathcal{N})$, and completely positive contractive maps $J:L_p(\mathcal{M})\to L_p(\mathcal{N})$ and $Q:L_p(\mathcal{N})\to L_p(\mathcal{M})$ such that
	\begin{equation*}
	T_t=QU_tJ,\quad \forall \ t>0.
	\end{equation*}
\end{definition}

The following definitions of these operators are considered in \cite{ADC,Haar}.
\begin{definition}
	Let $(\M, \tau)$ and $(\N, \phi)$ be von Neumann algebras equipped with normal faithful normalized traces $\tau$ and $\phi$, respectively. A linear map $T: \M \rightarrow \N$ is called a $(\tau, \phi)$-Markov map if
	\begin{enumerate}
		\item $T$ is completely positive,
		\item $T$ is unital,
		\item $\phi \circ T=\tau$.
	\end{enumerate}
	In particular, when $(\M, \tau)=(\N, \phi)$, we say that $T$ is a $\tau$-Markov map.
\end{definition}

\begin{definition}
	A $(\tau, \phi)$-Markov map $T: \M \rightarrow \N$ is called factorizable if there exists a von Neumann algebra $P$ equipped with a normal faithful normalized trace $\chi$, and $*$-monomorphisms $J_0: M \rightarrow P$ and $J_1: N \rightarrow P$ such that $J_0$ is $(\tau, \chi)$-Markov and $J_1$ is $(\phi, \chi)$-Markov, satisfying, moreover, $T=J_1^* \circ J_0$.
\end{definition}

The following definition is due to K\"{u}mmerer in \cite{KU}.
\begin{definition}
	Let $\M$ be a von Neumann algebra with a normal faithful normalized trace $\tau$ and let $T: \M \rightarrow \M$ be a $\tau$-Markov map. A dilation of $T$ is a quadruple $(\N, \phi, \alpha, \iota)$, where $\N$ is a von Neumann algebra with a normal faithful normalized trace $\phi$, $\alpha$ is an automorphism of $\N$ leaving $\phi$ invariant and $\iota: \M \rightarrow \N$ is a $(\tau, \phi)$-Markov *-monomorphism, satisfying
	$$
	T^n=\iota^* \circ \alpha^n \circ \iota, \quad \forall 
	 \ n \geq 0.
	$$
\end{definition}

The following theorem established by Haagerup and Musat in \cite[Theorem 4.4]{Haar} describes the equivalence of the factorizable property and the dilation property for $\tau$-Markov maps.
\begin{thm}\label{HMu}
	Let $\M$ be a von Neumann algebra with a normal faithful normalized trace $\tau$ and let $T: \M \rightarrow \M$ be a $\tau$-Markov map. The following statements are equivalent:
	\begin{enumerate}
		\item $T$ is factorizable,
		\item $T$ has a dilation.
	\end{enumerate}
\end{thm}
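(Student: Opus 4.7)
The direction $(2)\Rightarrow(1)$ is immediate. Given a dilation $(\mathcal{N},\phi,\alpha,\iota)$ of $T$, I would set $P=\mathcal{N}$, $\chi=\phi$, $J_0=\alpha\circ\iota$ and $J_1=\iota$. Both $J_0$ and $J_1$ are $(\tau,\phi)$-Markov $*$-monomorphisms: $\iota$ by hypothesis, and $\alpha\circ\iota$ because $\alpha$ is a $\phi$-preserving $*$-automorphism composed with the Markov monomorphism $\iota$. Since $J_1^*=\iota^*$, one gets $J_1^*\circ J_0=\iota^*\circ\alpha\circ\iota=T^1=T$, exhibiting $T$ as factorizable.

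For the converse $(1)\Rightarrow(2)$, my plan is to build a dilation from an infinite amalgamated free product on which $T$ arises as the compression of a Bernoulli-type shift. Starting from a factorization $T=J_1^*\circ J_0$ with $J_0,J_1\colon\M\to(P,\chi)$ Markov $*$-monomorphisms, I would: (i) introduce a bi-infinite sequence of copies $\{(P_n,\chi_n)\}_{n\in\mathbb{Z}}$ of $(P,\chi)$ together with their canonical embeddings $J_0^{(n)},J_1^{(n)}\colon\M\to P_n$; (ii) form the amalgamated free product $\mathcal{N}=\ast_{\M,\,n\in\mathbb{Z}}P_n$ in which $J_1^{(n)}(\M)$ is identified with $J_0^{(n+1)}(\M)$ through the natural isomorphism, producing a tracial von Neumann algebra $(\mathcal{N},\phi)$ with a trace extending each $\chi_n$; (iii) define the shift automorphism $\alpha\in\operatorname{Aut}(\mathcal{N},\phi)$ induced by $P_n\mapsto P_{n+1}$; and (iv) let $\iota\colon\M\to\mathcal{N}$ be the canonical embedding as the common amalgamated copy of $\M$, so that $\iota(x)=J_0^{(0)}(x)=J_1^{(-1)}(x)$. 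The identity $\iota^*\circ\alpha^n\circ\iota=T^n$ for $n\ge 0$ would then follow because, in the amalgamated free product, the conditional expectation onto $\iota(\M)$ of an alternating word coming from distinct, $\M$-free, copies factors along the sequence of embeddings encountered on the shift orbit, collapsing precisely to the $n$-fold composition $J_1^*J_0J_1^*J_0\cdots J_1^*J_0=T^n$.

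The hard part is step (ii): constructing the infinite amalgamated free product of $(P,\chi)$ along two distinct trace-preserving embeddings of $\M$ in such a way that the resulting trace $\phi$ on $\mathcal{N}$ is \emph{normal, faithful, and normalized}, the shift $\alpha$ is $\phi$-preserving, and the conditional expectation onto $\iota(\M)$ has the required alternating factorization on reduced words. This rests on Voiculescu's amalgamated free product construction together with Popa-type arguments ensuring normality and faithfulness of the amalgamated trace; once these are in place, verifying $\iota^*\circ\alpha^n\circ\iota=T^n$ reduces to a combinatorial bookkeeping exercise matching the shift index with the alternation of $J_0$ and $J_1$ at each step.
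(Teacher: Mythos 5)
The paper does not give its own proof of this theorem; it simply cites Haagerup and Musat \cite[Theorem 4.4]{Haar}. So the comparison below is an assessment of your sketch on its own merits.

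Your direction $(2)\Rightarrow(1)$ is correct and is the standard argument: from a dilation $(\N,\phi,\alpha,\iota)$ one takes $P=\N$, $\chi=\phi$, $J_1=\iota$, $J_0=\alpha\circ\iota$; both are $(\tau,\chi)$-Markov $*$-monomorphisms since $\alpha$ is a $\phi$-preserving $*$-automorphism, and $J_1^*\circ J_0=\iota^*\circ\alpha\circ\iota=T$.

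For $(1)\Rightarrow(2)$, the chain amalgamated free product with Bernoulli shift is a legitimate route to a Markov dilation, and your identification of where the work lies (normality and faithfulness of the amalgamated trace; the Markov property of conditional expectations on reduced words) is apt. However, there is a genuine orientation error in the bookkeeping. With the conventions you set up --- amalgamating by identifying $J_1^{(n)}(\M)$ with $J_0^{(n+1)}(\M)$, taking $\iota=J_0^{(0)}$, and letting $\alpha$ shift $P_n\mapsto P_{n+1}$ --- one has $\alpha(\iota(x))=J_0^{(1)}(x)$, which by your identification equals $J_1^{(0)}(x)\in P_0$, and conditioning onto $\iota(\M)=J_0^{(0)}(\M)$ then produces $J_0^*\bigl(J_1(x)\bigr)=T^*(x)$, not $T(x)=J_1^*\bigl(J_0(x)\bigr)$. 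Iterating gives $\iota^*\circ\alpha^n\circ\iota=(T^*)^n$. Since the definition in the paper requires $T=J_1^*\circ J_0$ and the theorem is asserted for an arbitrary $\tau$-Markov map (not assumed selfadjoint), this is a real gap: your construction as written dilates $T^*$ rather than $T$. The fix is purely cosmetic --- swap the roles of $J_0$ and $J_1$ in the amalgamation rule, or equivalently take $\iota=J_1^{(0)}$, or run the shift in the opposite direction --- after which the chain of conditional expectations collapses along the shift orbit exactly as you describe and yields $T^n$. In the paper's actual application each $T_t$ is selfadjoint, so there the distinction evaporates, but for the theorem as stated the orientation must be corrected.

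One further caution: the step you flag as hard really is. Making the $\mathbb{Z}$-indexed chain amalgamated free product into a tracial von Neumann algebra requires iterating two-factor amalgamated free products (Voiculescu/Popa), verifying that the trace on the inductive limit is normal, faithful and normalized, and checking that the trace-preserving conditional expectation onto $\iota(\M)$ restricted to $P_n$ factors through each intermediate amalgamating copy of $\M$ (the nested Markov property). These facts are true, but they constitute the bulk of a rigorous proof, and a complete argument would need to supply them; Haagerup--Musat's original proof handles precisely this kind of technical construction.
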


\begin{lem}\label{FAC}
	Let $\M$ be a von Neumann algebra with a normal faithful normalized trace $\tau$. Let $\{T_t\}_{t>0}$ be a noncommutative symmetric diffusion semigroup on $\mathcal{M}$. Then for each $t>0$, ${T_t}$ is factorizable.
\end{lem}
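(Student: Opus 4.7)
The plan is to invoke Theorem \ref{HMu} by producing a Kümmerer dilation of each $T_t$.

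First, I would verify that each $T_t$ is a $\tau$-Markov map in the sense required. Complete positivity and unitality are explicitly part of the definition of a noncommutative symmetric diffusion semigroup. Trace invariance $\tau \circ T_t = \tau$ follows by setting $y = 1$ in the selfadjointness relation:
\[\tau(T_t(x)) = \tau(T_t(x) \cdot 1) = \tau(x \cdot T_t(1)) = \tau(x),\]
since $T_t(1) = 1$. Hence $T_t$ is a $\tau$-Markov map, and the hypothesis of Theorem \ref{HMu} is met.

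The heart of the argument is to construct, for each fixed $t > 0$, a von Neumann algebra $\mathcal{N}$ with a normal faithful normalized trace $\phi$, a $\phi$-preserving automorphism $\alpha$ of $\mathcal{N}$, and a $(\tau,\phi)$-Markov $\ast$-monomorphism $\iota : \M \to \mathcal{N}$ satisfying $T_t^n = \iota^* \circ \alpha^n \circ \iota$ for every $n \geq 0$. My plan is to produce a joint dilation of the entire semigroup $\{T_t\}_{t>0}$ by a continuous one-parameter group $\{\alpha_s\}_{s \in \real}$ of trace-preserving automorphisms acting on a single enlarged tracial algebra $\mathcal{N}$, and then specialize to $\alpha := \alpha_t$ for the single map $T_t$. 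Concretely, starting from the positive selfadjoint generator $A$ of $\{T_t\}$ on $L_2(\M)$, one applies a second-quantization/cocycle construction (in the spirit of Kümmerer--Maassen, and later Anantharaman-Delaroche and Sauvageot in the symmetric setting) to build $\mathcal{N} \supset \M$ together with a quasi-free flow $\{\alpha_s\}$ and a normal trace-preserving conditional expectation $E : \mathcal{N} \to \M$ such that $T_t = E \circ \alpha_t \circ \iota$ for all $t \geq 0$. This gives a Kümmerer dilation $(\mathcal{N}, \phi, \alpha_t, \iota)$ of $T_t$.

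Having obtained the dilation, Theorem \ref{HMu} immediately yields factorizability of each $T_t$. The main obstacle is the construction of the dilating algebra and flow itself: it is here that the symmetry assumption $T_t = T_t^*$ is genuinely used, because without it factorizability can fail, as illustrated by the Haagerup--Musat counterexamples. The cleanest route, rather than redoing the construction from scratch, is to appeal to the existing dilation theorem for symmetric Markov semigroups on finite tracial von Neumann algebras and simply verify that the four defining conditions (a)--(d) of a noncommutative symmetric diffusion semigroup fit its hypotheses.
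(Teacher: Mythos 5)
Your proof is correct and follows essentially the same route as the paper: both verify that $T_t$ is a selfadjoint $\tau$-Markov map and then appeal to a known dilation/factorizability result for selfadjoint Markov semigroups on finite tracial von Neumann algebras. The paper simply cites \cite[Theorem 2.26]{JRS} for factorizability directly, whereas you route through a dilation theorem and Theorem \ref{HMu}; since Corollary \ref{coro288} immediately reapplies Theorem \ref{HMu} to recover a dilation, your detour is harmless but a touch redundant, and your explicit check of trace invariance ($\tau\circ T_t=\tau$ from selfadjointness and $T_t(1)=1$) is a correct detail the paper takes for granted.
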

\begin{proof}
	Note that $\{{T_t}\}_{t>0}$ is a semigroup of selfadjoint $\tau$-Markov maps. This is an immediate consequence of \cite[Theorem 2.26]{JRS}.
\end{proof}

\begin{definition}
	Suppose $1 \leqslant p<\infty$. We say that a completely positive contraction $T: L^p(\M) \rightarrow L^p(\M)$ on a noncommutative $L^p$-space $L^p(\M)$ is completely positively dilatable if there exist a noncommutative $L^p$-space $L^p(\N)$, two completely positive contractions $J: L^p(\M) \rightarrow L^p(\N)$ and $P: L^p(\N) \rightarrow L^p(\M)$ and a completely positive invertible isometry $U: L^p(\N) \rightarrow L^p(\N)$ with $U^{-1}$ completely positive such that
	$$
	T^n=P U^n J, \quad \forall \ n \geqslant 0 .
	$$
\end{definition}

\begin{corollary}\label{coro288}
	Let $\M$ be a von Neumann algebra with a normal faithful normalized trace $\tau$. Let $\{T_t\}_{t>0}$ be a noncommutative symmetric diffusion semigroup on $\mathcal{M}$. Then for any $t>0$, ${T_t}$ is completely positively dilatable.
\end{corollary}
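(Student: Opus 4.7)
The plan is to assemble the corollary directly from the two preceding results, with the $L^p$-level lifting being essentially bookkeeping. Fix $t>0$. First, Lemma \ref{FAC} gives that $T_t:\M\to\M$ is factorizable, and then Theorem \ref{HMu} produces a dilation of $T_t$ in the sense of Definition 2.8.3: a von Neumann algebra $\N$ with a normal faithful normalized trace $\phi$, a $\phi$-preserving $*$-automorphism $\alpha:\N\to\N$, and a $(\tau,\phi)$-Markov $*$-monomorphism $\iota:\M\to\N$ satisfying
\[
T_t^n \;=\; \iota^*\circ\alpha^n\circ\iota, \qquad \forall\, n\geq 0,
\]
as maps on $\M$.

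The remaining task is to transfer this identity to the noncommutative $L^p$ level and to read off the positivity/isometric properties required by the definition of complete positive dilatability. Since $\iota$ is a normal, trace-preserving $*$-monomorphism, it extends uniquely to a completely positive complete isometry $J:L^p(\M)\to L^p(\N)$, which is in particular a completely positive contraction. The automorphism $\alpha$ and its inverse $\alpha^{-1}$ are both trace-preserving $*$-automorphisms of $\N$, so each extends to a completely positive isometry of $L^p(\N)$; these extensions are mutually inverse, producing a completely positive invertible isometry $U:L^p(\N)\to L^p(\N)$ with $U^{-1}$ also completely positive. Finally, the trace-adjoint $\iota^*:\N\to\M$ is a $(\phi,\tau)$-Markov map (completely positive, unital, trace-preserving), hence extends to a completely positive contraction $P:L^p(\N)\to L^p(\M)$.

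With these three extensions in hand, the identity $T_t^n=\iota^*\alpha^n\iota$ on $\mathcal{S}(\M)$ extends by density to
\[
T_t^n \;=\; P\,U^n\,J, \qquad \forall\, n\geq 0,
\]
as maps on $L^p(\M)$, which is precisely the required complete positive dilatability of $T_t$. The main conceptual content is already encoded in Haagerup-Musat's equivalence (Theorem \ref{HMu}); the only step demanding any care here is the routine verification that trace-preserving normal $*$-homomorphisms between finite von Neumann algebras (and their trace-adjoints) extend to completely positive maps on $L^p$ with the appropriate isometric behavior, and this is classical (see e.g. \cite{JX,PX}). I therefore do not anticipate a significant obstacle: the proof is essentially a direct combination of Lemma \ref{FAC}, Theorem \ref{HMu}, and standard extension lemmas for the noncommutative $L^p$-scale.
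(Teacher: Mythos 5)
Your proof is correct and takes essentially the same approach as the paper's: both invoke Lemma \ref{FAC} and Theorem \ref{HMu} to obtain a Kümmerer-type dilation and then pass to complete positive dilatability on $L^p$. The paper treats that last passage as immediate (its proof is a one-liner), whereas you spell out the standard $L^p$-extension bookkeeping for $\iota$, $\iota^*$ and $\alpha$; this matches what the authors leave implicit.
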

\begin{proof}
	By Lemma \ref{FAC} and Theorem \ref{HMu}, ${T_t}$ has a dilation, and then is completely positively dilatable.
\end{proof}

Arhancet gave the following theorem in \cite[Theorem 5.4]{ADC}, which allows us to transit from the dilation properties of individual operators to the dilatable properties of a semigroup.
\begin{thm}\label{dilcom}
	Suppose $1<p<\infty$. Let $\{T_t\}_{t > 0}$ be a strongly continuous semigroup of completely positive contractions on a noncommutative $L^p$-space $L^p(\M)$ such that each $T_t: L^p(\M) \rightarrow L^p(\M)$ is completely positively dilatable. Then there exists a noncommutative $L^p$-space $L^p(\N)$, a strongly continuous group of completely positive isometries $U_t: L^p(\N) \rightarrow L^p(\N)$ and two completely positive contractions $J: L^p(\M) \rightarrow L^p(\N)$ and $P: L^p(\N) \rightarrow L^p(\M)$ such that
	$$
	T_t=P U_t J, \quad \forall \  t > 0
	$$
\end{thm}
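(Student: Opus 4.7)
The plan is to construct the required group dilation via an ultraproduct construction, following the spirit of Fendler's classical argument adapted to noncommutative $L^p$-spaces. The core idea is to simultaneously dilate all the $T_t$'s by first dilating the time-$1/n$ discretizations $\{T_{k/n}\}_{k \geq 0}$ for every $n$, and then taking an ultralimit in $n$ to manufacture a continuous-parameter group.

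First, I would apply the hypothesis individually to $T_{1/n}$ for each integer $n \geq 1$: this yields a noncommutative $L^p$-space $L^p(\N_n)$, completely positive contractions $J_n : L^p(\M) \to L^p(\N_n)$ and $P_n : L^p(\N_n) \to L^p(\M)$, and a completely positive invertible isometry $U_n : L^p(\N_n) \to L^p(\N_n)$ with completely positive inverse, such that $T_{k/n} = P_n U_n^k J_n$ for every $k \geq 0$. Next, fixing a free ultrafilter $\omega$ on $\mathbb{N}$, I would form the Raynaud ultraproduct $L^p(\N) := \prod_\omega L^p(\N_n)$, which is again the $L^p$-space of a semifinite von Neumann algebra $\N$. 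The maps $J$ and $P$ are defined as the natural ultralimits of $J_n$ and $P_n$ and remain completely positive contractions. For each $t \in \mathbb{R}$, I would set $U_t := (U_n^{\lfloor tn \rfloor})_\omega$ (with $U_n^{-1}$ used for negative powers when $t<0$). The equation $T_t = P U_t J$ then follows by comparing it with $T_t = \lim_n T_{\lfloor tn \rfloor/n}$, using strong continuity of $\{T_t\}$, and the discrete identities from Step 1.

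The group property $U_{s+t} = U_s U_t$ must be checked carefully, because $\lfloor (s+t)n \rfloor - \lfloor sn \rfloor - \lfloor tn \rfloor \in \{0,1\}$; this single-index discrepancy contributes an extra factor of $U_n^{\pm 1}$ which has to become negligible in the ultralimit. One verifies this on vectors in the closure of strongly continuous $T$-orbits, where the discrepancy factor is asymptotically trivial precisely because of the strong continuity hypothesis on $\{T_t\}$ (this is the essence of Fendler's lemma in the noncommutative setting).

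The main obstacle is strong continuity of $t \mapsto U_t \xi$ on $L^p(\N)$, since ultraproducts do not in general preserve strong continuity: this is the technical heart of the proof. I would overcome it by restricting to the smallest closed $U_t$-invariant subspace $X \subseteq L^p(\N)$ containing $J(L^p(\M))$. On the dense subspace $\mathrm{span}\{U_s J x : s \in \mathbb{R},\ x \in L^p(\M)\}$, strong continuity is immediate from $U_t U_s J x = U_{s+t} J x$ combined with the continuity of $s \mapsto J T_s x$, and uniform boundedness of $\{U_t\}$ extends continuity to all of $X$. The final technical step is identifying $X$ with the $L^p$-space of a semifinite von Neumann subalgebra of $\N$; this relies on verifying the appropriate $L^\infty$-bimodule structure of $X$ inherited from $L^p(\N)$, and is exactly the point where the noncommutative machinery developed in Arhancet's framework is indispensable.
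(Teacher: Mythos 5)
Your overall strategy --- dilate each $T_{1/n}$ individually using the hypothesis, then assemble a continuous-parameter group dilation via a Raynaud ultraproduct --- is the right one and is in the spirit of Arhancet's proof of this theorem, which the paper cites rather than reproves. However, the two places you explicitly flag as technical are precisely where the proof lives, and the justifications you sketch for them do not hold up.

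For the group property, you note that $U_s U_t$ and $U_{s+t}$ differ in the $n$-th coordinate by a factor $U_n^{\pm 1}$ whenever $\lfloor(s+t)n\rfloor - \lfloor sn\rfloor - \lfloor tn\rfloor = 1$, and you assert this factor becomes negligible ``precisely because of the strong continuity hypothesis on $\{T_t\}$.'' But strong continuity of $\{T_t\}$ is a statement on $L^p(\M)$; the only bridge to $U_n$ is the identity $P_n U_n^k J_n = T_{k/n}$, which gives no control over $\|U_n\eta_n - \eta_n\|_{L^p(\N_n)}$ for the relevant vectors $\eta_n$. The ``closure of strongly continuous $T$-orbits'' does not transfer to orbits of $U_n$ without the very group law you are trying to establish, and for typical $s,t$ the floor-function discrepancy equals $1$ on an $\omega$-large set of $n$, so the extra factor is genuinely present in the ultralimit.

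For strong continuity of $t\mapsto U_t\xi$, you claim it is ``immediate on $\mathrm{span}\{U_s J x\}$ from $U_t U_s J x = U_{s+t}Jx$ combined with the continuity of $s\mapsto JT_s x$.'' The first part is circular: continuity at $t=0$ applied to $\xi = U_sJx$ is exactly continuity of $r\mapsto U_rJx$ at $r=s$. The second part is a non sequitur: $U_sJx\neq JT_sx$ in general --- the only available relation is $PU_sJ = T_s$, and $P$ is far from injective, so $\|T_{s+t}x - T_sx\|\to 0$ gives no information about $\|U_{s+t}Jx - U_sJx\|$. A correct treatment requires genuine extra work: for instance, building the group first on a rational time grid chosen so that the integer-rounding mismatch disappears, establishing weak measurability of $t\mapsto U_t\xi$, and then passing to the dense set of regularised vectors $\int\varphi(s)U_sJx\,ds$ where strong continuity can be checked by a change of variables. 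None of this appears in the sketch. You should also be aware that it is not automatic that the Raynaud ultraproduct $\prod_\omega L^p(\N_n)$ is the $L^p$-space of a semifinite von Neumann algebra, which matters for the application to Theorem~\ref{thm210} in the semifinite setting the paper actually uses.
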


\begin{thm}\label{thm210}
	Let $\M$ be a von Neumann algebra with a normal faithful normalized trace $\tau$. If $\{T_t\}_{t>0}$ be a noncommutative symmetric diffusion semigroup on $\mathcal{M}$, then $\{{T_t}\}_{t>0}$ is dilatable on $L_p(\M)$ for any $1<p<\8$.
	\end{thm}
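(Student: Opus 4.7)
The plan is to assemble the three preceding results in this section into a single chain. First, I would verify the standing hypotheses of Theorem \ref{dilcom}: that $\{T_t\}_{t>0}$ extends to a strongly continuous semigroup of completely positive contractions on $L_p(\mathcal{M})$ for every $1<p<\infty$. Axiom (d) in the definition of a noncommutative symmetric diffusion semigroup directly guarantees the completely contractive extension, complete positivity is inherited from $\mathcal{M}$, and strong continuity on $L_p(\mathcal{M})$ is precisely what \cite[Lemma 1.1]{JX2} (or \cite[Proposition 1.23]{D}) provides and was already quoted in the introduction.

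Next, by Lemma \ref{FAC} each $T_t\colon\mathcal{M}\to\mathcal{M}$ is factorizable as a $\tau$-Markov map, so Theorem \ref{HMu} of Haagerup--Musat supplies a K\"ummerer-style dilation of $T_t$. Promoting this dilation to the $L_p$-level yields, for each fixed $t>0$, a completely positive invertible isometry with completely positive inverse, together with two completely positive contractions realizing the discrete iterates $T_t^n$ by the factorization prescribed in the definition preceding Corollary \ref{coro288}. This step has already been packaged as Corollary \ref{coro288}, so I would simply cite it to conclude that each individual $T_t$ is completely positively dilatable in the sense required.

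Finally, with every $T_t$ completely positively dilatable and the semigroup strongly continuous on $L_p(\mathcal{M})$, I would invoke Arhancet's Theorem \ref{dilcom} as a black box. It produces in one stroke a semifinite von Neumann algebra $\mathcal{N}$, a strongly continuous group of completely positive isometries $\{U_t\}_{t>0}$ on $L_p(\mathcal{N})$, and completely positive contractions $J\colon L_p(\mathcal{M})\to L_p(\mathcal{N})$ and $Q\colon L_p(\mathcal{N})\to L_p(\mathcal{M})$ such that $T_t=QU_tJ$ for all $t>0$, which is exactly the definition of dilatability.

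In terms of effort, essentially no new work is needed here: the statement is a direct assembly. The only mild point to be careful about is the passage from the K\"ummerer dilation (automorphism on a finite von Neumann algebra) delivered by Theorem \ref{HMu} to the $L_p$-level object required by the discrete definition preceding Corollary \ref{coro288}; but this is standard once one identifies the trace-preserving automorphism with its $L_p$-extension, which is automatically a completely positive surjective isometry whose inverse is also completely positive. The genuine obstacle was hidden upstream, in Theorem \ref{dilcom}, which builds a continuous-time group dilation out of individual discrete-time dilations; we are entitled to use it off the shelf.
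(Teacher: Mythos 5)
Your proof is correct and follows the paper's argument exactly: it cites Corollary \ref{coro288} to get that each $T_t$ is completely positively dilatable and then invokes Arhancet's Theorem \ref{dilcom} to assemble the continuous-time dilation. The paper's own proof is the same two-step chain, stated even more tersely.
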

\begin{proof}
	It follows from Corollary \ref{coro288} and Theorem \ref{dilcom}.
	\end{proof}

\subsection{Noncommutative Burkholder-Gundy inequality}\label{NBGI} In this subsection, we will apply the best constants of the Burkholder-Gundy inequality to obtain the lower bounds of the best constants of the noncommutative Littlewood-Paley-Stein inequality.

First we recall noncommutative martingales. Let $\mathcal{M}$ denote a finite von Neumann algebra equipped with a normal faithful normalized trace $\tau$, and $\{\mathcal{M}_{n}\}_{n \geq 1}$ an increasing filtration of von Neumann subalgebras of $\mathcal{M}$ whose union is $w^{*}$-dense in $\mathcal{M}$. As usual, $L_{p}\left(\mathcal{M}_{n}\right)=L_{p}\left(\mathcal{M}_{n},\left.\tau\right|_{\mathcal{M}_{n}}\right)$ is naturally identified as a subspace of $L_{p}(\mathcal{M})$. It is well-known that there exists a unique normal faithful conditional expectation $\mathcal{E}_{n}$ from $\mathcal{M}$ onto $\mathcal{M}_{n}$ such that $\tau \circ \mathcal{E}_{n}=\tau .$ Moreover, $\mathcal{E}_{n}$ extends to a contractive projection from $L_{p}(\mathcal{M})$ onto $L_{p}\left(\mathcal{M}_{n}\right)$, for every $1 \leq p<\infty$, which is still denoted by $\mathcal{E}_{n}$. Similarly, we denote by $d\mathcal{E}_n=\mathcal{E}_{n}-\mathcal{E}_{n-1}$ for $n\geq 1$ the martingale difference with $\mathcal{E}_0=0$ as convention.

A noncommutative martingale with respect to $\{\mathcal{M}_{n}\}_{n\geq 1}$ is a sequence $x=\{x_{n}\}_{n \geq 1}$ in $L_{1}(\mathcal{M})$ such that
$$
x_{n}=\mathcal{E}_{n}\left(x_{n+1}\right), \quad \forall \ n \geq 1.
$$
The difference sequence of $x$ is $d x=\{d x_{n}\}_{n \geq 1}$, where $d x_{n}=x_{n}-x_{n-1}$ (with $x_{0}=0$ by convention). If in addition $x_n\in L_p(\M)$ with $1\leq p<\8$, $x$ is called an $L_{p}$-martingales with respect to $\{\M_n\}_{n\geq1}$. In this case, we set for $1\leq p<\8$
$$ \|x\|_p=\sup_{n\geq 1}\|x_n\|_p.   $$

In the sequel, we will fix $\left(\mathcal{M}, \tau\right)$ and $\{\mathcal{M}_{n}\}_{n \geq 1}$ as above and all noncommutative martingales will be with respect to $\{\mathcal{M}_{n}\}_{n \geq 1}$.


Now we introduce norms in Hardy spaces of martingales defined in \cite{PX2}. Let $1\le p<\infty$ and $x=\{x_n\}_{n\ge 1}$ be an $L_p$-martingale. Set,
for $p\ge 2$,

\begin{align*}
\|x\|_{\mathcal{H}_p(\mathcal{M})}&=\|\{dx_n\}\|_{L_p(\mathcal{M};\ell_2^{r\cap c})}\\
&=\max\bigg\{\bigg\|\biggl(\sum_{n= 1}^\8|dx_n|^2\biggr)^{1/2}\bigg\|_{p},\bigg\|\biggl(\sum_{n= 1}^\8|dx_n^*|^2\biggr)^{1/2}\bigg\|_{p}\bigg\},
\end{align*}
and for $1\leq p<2$,
\begin{align*}
\|x\|_{\mathcal{H}_p(\mathcal{M})}&=\|\{dx_n\}\|_{L_p(\mathcal{M};\ell_2^{r+c})}\\
&=\inf\bigg\{\bigg\|\biggl(\sum_{n= 1}^\8|dy_n|^2\biggr)^{1/2}\bigg\|_{p}+\bigg\|\biggl(\sum_{n= 1}^\8|dz_n^*|^2\biggr)^{1/2}\bigg\|_{p}\bigg\},
\end{align*}
where the infimum runs over all decompositions $x=y+z$ as sums of two $L_p$-martingales.

 In \cite{PX2}, Pisier and Xu proved the following celebrated noncommutative Burkholder-Gundy inequality.
\begin{thm}\label{BGI}
	Let $1<p<\infty$, and let $x=\{x_n\}_{n\ge 1}$ be an $L_p$-martingale. Then
	\begin{equation*}
	\theta_p^{-1}\|x\|_{\mathcal{H}_p(\mathcal{M})}\le \|x\|_p\le \gamma_p \|x\|_{\mathcal{H}_p(\mathcal{M})},
	\end{equation*}
	where $\theta_p$ and $\gamma_p$ depend only on $p$ and are the best constants to satisfy the above noncommutative Burkholder-Gundy inequality.
\end{thm}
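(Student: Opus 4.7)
The plan is to prove the inequality by treating the three regimes $p=2$, $p>2$, $1<p<2$ separately, with duality linking the last two. For $p=2$, orthogonality of martingale differences together with the tracial property gives immediately
\[
\|x\|_2^2 \;=\; \sum_{n\ge 1}\|dx_n\|_2^2 \;=\; \bigl\|\sum_n|dx_n|^2\bigr\|_1 \;=\; \bigl\|\sum_n|dx_n^*|^2\bigr\|_1,
\]
so both $\|x\|_{\mathcal{H}_2(\M)}^c$ and $\|x\|_{\mathcal{H}_2(\M)}^r$ equal $\|x\|_2$ and the constants are $1$. This serves as the base case from which the others are bootstrapped by interpolation with the endpoints.

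For $p>2$, I would establish each inequality separately. For the upper bound $\|x\|_p\lesssim\|x\|_{\mathcal{H}_p(\M)}$, the key identity is
\[
|x_N|^2=\sum_{n=1}^N|dx_n|^2+\sum_{n=1}^N x_{n-1}^*\,dx_n+\sum_{n=1}^N dx_n^*\,x_{n-1},
\]
so that $\|x_N\|_p^2=\|\,|x_N|^2\|_{p/2}$ is controlled by $\|S_c(x)\|_p^2$ plus the $L_{p/2}$-norm of the martingale $M_N=\sum x_{n-1}^* dx_n$. The latter is then handled by Cauchy--Schwarz in the form $|M_N|\le \bigl(\sum|x_{n-1}|^2\cdot\text{mult.}\bigr)^{1/2}\cdot S_c(x)$ combined with the noncommutative Doob maximal inequality applied to the submartingale $n\mapsto x_n^* x_n$; iterating if necessary on $p/2$ closes the estimate. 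For the lower bound $\|x\|_{\mathcal{H}_p(\M)}\lesssim\|x\|_p$ at $p>2$, I would invoke the noncommutative Stein inequality $\bigl\|\bigl(\sum_n|\mathcal{E}_n(a_n)|^2\bigr)^{1/2}\bigr\|_p\lesssim \bigl\|\bigl(\sum_n|a_n|^2\bigr)^{1/2}\bigr\|_p$ together with a telescoping decomposition $dx_n=\mathcal{E}_n(x-x_{n-1})-\mathcal{E}_{n-1}(x-x_{n-1})$, reducing $\|S_c(x)\|_p$ to $\|x\|_p$ up to a universal constant; the row estimate is identical after passing to adjoints.

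For $1<p<2$, I would argue by duality. The pairing (\ref{duality2}) combined with the sum/intersection definitions gives the isometric identification
\[
L_p(\M;\ell_2^{r+c})^*=L_{p'}(\M;\ell_2^{r\cap c}),
\]
and, crucially, the space of martingale differences is weak-$*$ dense in the relevant subspaces. Taking adjoints of the $p'>2$ estimate already proved and testing against martingale increments reverses the inequality direction and transfers the constant from $\mathcal{H}_{p'}$ to $\mathcal{H}_p$, so each of the two inequalities at exponent $p\in(1,2)$ is obtained from the corresponding one at exponent $p'$.

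The main obstacle, and where the classical proof of Burkholder--Gundy via stopping times fails, is the control of the cross terms $\sum x_{n-1}^* dx_n$ appearing in the expansion of $|x_N|^2$: in the noncommutative setting these do not vanish and cannot be pruned by optional stopping. The substitute I rely on is the noncommutative Doob inequality of Junge together with Stein's noncommutative inequality, both of which are by now standard but whose interplay with the column/row structure has to be arranged carefully; this is also the step that controls the explicit dependence of $\theta_p,\gamma_p$ on $p$.
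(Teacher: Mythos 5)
The paper does not give a proof of this theorem at all: Theorem~\ref{BGI} is the noncommutative Burkholder--Gundy inequality of Pisier and Xu, and the text simply cites it from \cite{PX2} as background (together with the later sharp constants of Junge--Xu and Randrianantoanina in Theorem~\ref{JXR}). So there is no paper proof to compare against; what remains is to assess your sketch on its own merits, and it has several genuine gaps.

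First, your lower bound at $p>2$ via Stein's inequality does not work as written. The ``telescoping'' identity $dx_n=\mathcal{E}_n(x-x_{n-1})-\mathcal{E}_{n-1}(x-x_{n-1})$ is vacuous: since $x_{n-1}$ is $\mathcal{M}_{n-1}$-measurable, the second term is $0$ and the first is just $dx_n$, so you have not decomposed anything. More to the point, Stein's inequality bounds $\|\{\mathcal{E}_n(a_n)\}\|_{L_p(\mathcal{M};\ell_2^c)}$ by $\|\{a_n\}\|_{L_p(\mathcal{M};\ell_2^c)}$, i.e.\ it controls a column square function by another column square function; it cannot by itself produce a bound of $\|(\sum_n|dx_n|^2)^{1/2}\|_p$ by the scalar norm $\|x\|_p$. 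Pisier--Xu obtain this direction for $p\ge2$ through the noncommutative Khintchine inequality (so that $\|x\|_{\mathcal{H}_p}\approx\|\sum_n\varepsilon_n\,dx_n\|_p$) and then unconditionality of martingale differences; that mechanism is entirely absent from your outline.

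Second, your upper bound at $p>2$ via the expansion of $|x_N|^2$ and ``iterating on $p/2$'' is circular. The cross term $M_N=\sum_{n\le N}x_{n-1}^*\,dx_n$ is itself an $L_{p/2}$-martingale, and controlling $\|M_N\|_{p/2}$ is precisely the problem you are trying to solve one exponent down; without an a~priori good dependence on $p$ and a genuine induction base that closes, this does not terminate, and Doob plus Cauchy--Schwarz does not break the loop. Third, the duality step for $1<p<2$ presupposes the Hardy-space duality $\mathcal{H}_p(\mathcal{M})^*=\mathcal{H}_{p'}(\mathcal{M})$, in particular that the closed span of martingale differences sits as a complemented subspace of $L_p(\mathcal{M};\ell_2^{r+c})$ with control on the projection norm. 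That complementation is again Stein's inequality, and the identification of the dual Hardy space is itself one of the nontrivial theorems of the Pisier--Xu theory, essentially on par with Burkholder--Gundy; it cannot be taken for granted as input to a proof of Theorem~\ref{BGI}. The correct historical route is to prove the hard direction $\|x\|_{\mathcal{H}_p}\lesssim\|x\|_p$ directly for $1<p<2$ by a delicate row/column splitting of the $dx_n$ (Pisier--Xu, later sharpened by Randrianantoanina) and then dualize.
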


Later on, in \cite{JX4}, Junge and Xu gave an almost complete picture on the optimal orders of $\theta_p$ and $\gamma_p$, except the optimal order of $\theta_p$ as $p\to 1$. Finally, Randrianantoanina in \cite{RN2} addressed the only remaining unsolved case on the optimal order of $\theta_p$ as $p\to 1$. To summarize the aforementioned results, we present the following theorem.
\begin{thm}\label{JXR}
	Let $1<p<\infty$. Let $\theta_p$ and $\gamma_p$ be as in Theorem \ref{BGI}. Then
	\begin{enumerate}
		\item $\theta_p\approx p$ as $p\to\infty$ and $\theta_p\approx p'$ as $p\to 1$,
		\item $\gamma_p\approx p$ as $p\to\infty$ and $\gamma_p\approx 1$ as $p\to 1$.
	\end{enumerate}
\end{thm}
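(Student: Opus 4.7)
The plan is to split the proof into upper and lower bounds in each asymptotic regime, importing the noncommutative Burkholder--Rosenthal decomposition of Junge--Xu \cite{JX4} for the $p \to \infty$ side, Randrianantoanina's noncommutative Gundy decomposition \cite{RN2} for the remaining $p \to 1$ upper bound, and explicit free-probabilistic test martingales for lower bounds.

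For the $p \to \infty$ upper bounds, I would invoke the noncommutative Burkholder--Rosenthal inequality
\[
\|x\|_p \approx \|x\|_{h_p^c(\M)} + \|x\|_{h_p^r(\M)} + \bigl\|\{dx_n\}\bigr\|_{L_p(\M;\ell_p)}, \qquad p \geq 2,
\]
where $\|x\|_{h_p^c(\M)} = \bigl\|(\sum_n \mathcal{E}_{n-1}|dx_n|^2)^{1/2}\bigr\|_p$ is the conditioned column square function. Each summand on the right is comparable to $\|x\|_{\mathcal{H}_p(\M)}$ up to the noncommutative Stein-type constant, whose sharp order is $p$ as $p \to \infty$, while the $\ell_p$ diagonal term is dominated by the $\ell_2$ square functions for free. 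This yields $\gamma_p \lesssim p$ and $\theta_p \lesssim p$. The companion bound $\theta_p \lesssim p'$ as $p \to 1$ then follows via the Pisier--Xu duality between $\mathcal{H}_p(\M)$ and $\mathcal{H}_{p'}(\M)$, by dualizing the already-proved $\gamma_{p'} \lesssim p'$ at the conjugate exponent.

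The residual upper bound $\gamma_p \lesssim 1$ as $p \to 1$ is the crux of the proof, since the naive route of complex interpolation from $L_2$ to $L_1$ produces only $\gamma_p \lesssim p'$. Following Randrianantoanina, I would take a near-optimal decomposition $x = y + z$ realizing $\|x\|_{\mathcal{H}_p(\M)}$ for $1 < p < 2$ and perform a noncommutative Gundy/Davis splitting at spectral level $\lambda$ of the column square function $S_c(y)$. A good-$\lambda$ inequality, underpinned by Cuculescu's weak-$L_1$ Doob maximal inequality and a careful spectral truncation substituting for classical stopping times, then yields $\|y\|_p \lesssim \|S_c(y)\|_p$ with constant uniformly bounded for $p$ near $1$; the symmetric row estimate closes the proof. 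The central obstacle is preventing the good-$\lambda$ constants from blowing up as $p \to 1$, which demands operator-theoretic substitutes (spectral projections, a noncommutative Davis-type decomposition of the martingale differences) for the classical stopping-time arguments, and is where the absence of pointwise truncation forces genuinely new ideas.

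Matching lower bounds come from testing on explicit free-probabilistic martingales of Junge--Xu, built from freely independent Haar unitaries or matrix-lifted generators; these deliver $\gamma_p, \theta_p \gtrsim p$ as $p \to \infty$, from which $\theta_p \gtrsim p'$ as $p \to 1$ follows by Pisier--Xu duality, while $\gamma_p \geq 1$ is trivial.
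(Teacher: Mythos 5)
First, a point of reference: the paper does not prove this statement at all --- Theorem \ref{JXR} is stated as a summary of the results of \cite{JX4} and \cite{RN2} --- so your sketch has to be judged as a reconstruction of those papers, and its central step fails. The duality between $\mathcal{H}_p(\M)$ and $\mathcal{H}_{p'}(\M)$ only runs cleanly in one direction: pairing $x\in L_q(\M)$ ($q\ge 2$) with finite martingales $y$ in the unit ball of $L_{q'}(\M)$, using $\tau(y^*x)=\sum_n\tau(dy_n^*dx_n)$ and H\"older on the column/row pieces of any decomposition of $y$, gives $\gamma_q\le\theta_{q'}$; that is, it deduces the $p\to\infty$ bound for $\gamma_p$ from the $p\to 1$ bound for $\theta_p$, not the other way round. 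To argue as you propose, you must realize $\|x\|_{\mathcal{H}_p(\M)}$ ($1<p<2$) by pairing $\{dx_n\}$ with an arbitrary element $\{b_n\}$ of the unit ball of $L_{p'}(\M;\ell_2^{r\cap c})$ and then replace $b_n$ by the martingale difference projections $\mathcal{E}_n(b_n)-\mathcal{E}_{n-1}(b_n)$; this costs a Stein-type inequality on $L_{p'}(\M;\ell_2^{r\cap c})$ whose constant is unbounded as $p'\to\infty$, so the sharp order $p'$ is destroyed. This is precisely why the order of $\theta_p$ as $p\to 1$ remained open after \cite{JX4} and required Randrianantoanina's weak-type $(1,1)$ square-function argument in \cite{RN2}. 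Your outline therefore has the crux misplaced: the estimate on which you spend the Cuculescu/good-$\lambda$ machinery, namely $\gamma_p\lesssim 1$ as $p\to 1$, is already contained in \cite{JX4}, while the genuinely hard bound $\theta_p\lesssim p'$ is disposed of by a soft duality that cannot give the stated order (note also that good-$\lambda$ inequalities are exactly the classical stopping-time tool that has no noncommutative substitute; Randrianantoanina's actual proofs use Cuculescu projections and explicit decompositions, and they concern the $\theta$ direction, not the $\gamma$ direction).

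The $p\to\infty$ and lower-bound parts are also not tight as written. Comparing the conditioned square functions $h_p^c,h_p^r$ with the ordinary ones costs the square root of the dual Doob constant at $p/2$, which is unbounded in $p$ (its optimal order is known), so composing it with the Burkholder--Rosenthal equivalence overshoots the sharp order $p$ for $\gamma_p$; moreover Burkholder--Rosenthal bounds $\|x\|_p$ by the conditioned square functions, so it does not produce $\theta_p\lesssim p$, i.e. $\max\{\|S_c(x)\|_p,\|S_r(x)\|_p\}\lesssim p\,\|x\|_p$, in the direction you claim --- there is no norm domination of the ordinary square function by the conditioned one without further work. Finally, freely independent Haar unitaries or semicircular families satisfy operator-valued Khintchine inequalities with constants bounded in $p$, so they cannot witness $\gamma_p,\theta_p\gtrsim p$; the extremal martingales in \cite{JX4} are of a different, genuinely noncommutative nature. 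The only pieces of your plan that stand as stated are the trivial bound $\gamma_p\gtrsim 1$ and the transfer of lower bounds through the valid inequality $\theta_p\ge\gamma_{p'}$, provided $\gamma_{p'}\gtrsim p'$ has first been established by a correct example.
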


Now we show the lower bounds of $\alpha_p$ and $\beta_p$. We follow the idea in \cite{XZ}. To this end, we first introduce a particular semigroup constructed by conditional expectations. Given a strictly increasing sequence $ \{ a_n \}_{n\geqslant 0} $ with $ a_0 = 0 $ and $ \lim\limits_{n \to \infty} a_n = 1  $, we define the mapping $\widetilde{T_t} \ (t>0)$ as follows
	\begin{equation}\label{eee3}
	\widetilde{T_t}=\sum\limits_{n= 1}^{\infty}(1-a_{n-1})^td \mathcal{E}_n=\sum\limits_{n= 1}^{\infty}\left[(1-a_{n-1})^t-(1-a_n)^t\right]\mathcal{E}_n. 
	\end{equation}
	It is clear that $\{\widetilde{T_t}\}_{t>0}$ is a noncommutative symmetric diffusion semigroup of $\tau$-Markov maps. Let $\{\widetilde{P}_t\}_{t>0}$ be the Poisson semigroup subordinated to $\{\widetilde{T}_t\}_{t>0}$. The following theorem established by Xu and Zhang in \cite{XZ} implies the equivalence of the corresponding norms between martingale square functions and square functions associated with $\{\widetilde{P}_t\}_{t>0}$.
	\begin{thm}\label{XZ}
		There exists a sequence $\{a_n\}_{n\geq 0}$ such that for any $1< p<\infty$,
		\begin{equation*}
			\|x\|_{p,\widetilde{P}}\approx \|x\|_{\mathcal{H}_p(\mathcal{M})}.
		\end{equation*}
	\end{thm}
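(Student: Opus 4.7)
The plan is to choose the sequence $\{a_n\}$ so that the infinitesimal generator of the Poisson semigroup $\{\widetilde{P}_t\}$ has a lacunary geometric spectrum on the martingale difference subspaces. This will force the Poisson square function to decouple, up to absolute constants, into the individual martingale differences, and hence its $L_p$-norm to coincide with the martingale Hardy norm.

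Concretely, I would set $a_0=0$ and $a_{n-1}=1-e^{-q^{2n}}$ for $n\ge 2$, where $q>1$ is a large constant to be fixed later; this produces a strictly increasing sequence with $a_n\to 1$. Since both $\widetilde{T}_t$ and its subordinate $\widetilde{P}_t$ act diagonally on $\mathrm{ran}(d\E_n)$---the former by $e^{-tq^{2n}}$ and the latter by $e^{-t\mu_n}$ with $\mu_n=q^n$---we have, restricting to $x\in L_p(\M)$ with $\E_1 x=0$ (so $dx_1=0$; the fixed subspace plays no role on either side of the asserted equivalence),
\[
t\,\pa \widetilde{P}_t(x)=-\sum_{n\ge 2}t\mu_n e^{-t\mu_n}\,dx_n,
\]
and hence, by Fubini,
\[
|G_c^{\widetilde{P}}(x)|^2=\sum_{n,m\ge 2}K_{nm}\,dx_n^*\,dx_m,\qquad K_{nm}:=\int_0^\infty t^2\mu_n\mu_m e^{-t(\mu_n+\mu_m)}\frac{dt}{t}=\frac{\mu_n\mu_m}{(\mu_n+\mu_m)^2},
\]
with the parallel formula $|G_r^{\widetilde{P}}(x)|^2=\sum_{n,m\ge 2}K_{nm}\,dx_n\,dx_m^*$ for the row square function.

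The central estimate is then $K\approx I$ as a bounded operator on $\ell_2$, for which the lacunary choice $\mu_n=q^n$ is crucial. A short calculation yields $K_{nn}=1/4$ and $K_{nm}=(q^{|n-m|/2}+q^{-|n-m|/2})^{-2}\le q^{-|n-m|}$, whence Schur's test gives $\|K-\tfrac14 I\|_{\ell_2\to\ell_2}\le 2/(q-1)$. Fixing $q$ large (e.g.\ $q\ge 16$) produces absolute constants $0<c<C$ with $cI\preceq K\preceq CI$. The standard Schur positivity argument---factoring $K-cI=S^*S$ and $CI-K=R^*R$ as positive matrices and expanding---then transfers these scalar inequalities to the operator-valued ones
\[
c\sum_{n\ge 2}dx_n^*\,dx_n\;\preceq\;|G_c^{\widetilde{P}}(x)|^2\;\preceq\;C\sum_{n\ge 2}dx_n^*\,dx_n \quad\text{in }\M,
\]
and the monotonicity of $\|\cdot\|_{p/2}$ on the positive cone yields $\|G_c^{\widetilde{P}}(x)\|_p\approx \|\{dx_n\}\|_{L_p(\M;\ell_2^c)}$ with constants independent of $p$; the row case is identical.

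The matching of norms is then essentially immediate. For $p\ge 2$, $\|x\|_{p,\widetilde{P}}$ equals the maximum of its column and row parts, which by the previous step coincides, up to absolute constants, with $\|x\|_{\mathcal{H}_p(\M)}$. For $1<p<2$, every decomposition $x=y+z$ in $L_p(\M)$ induces a martingale-difference decomposition $dx_n=dy_n+dz_n$, and conversely every $(\ell_2^c+\ell_2^r)$-summable difference-level decomposition $\{dx_n\}=\{dy_n\}+\{dz_n\}$ integrates back to an $L_p$-decomposition (since each summand is itself a convergent $L_p$-martingale), so the two infima defining $\|x\|_{p,\widetilde{P}}$ and $\|x\|_{\mathcal{H}_p(\M)}$ agree up to the previous step's constants. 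The main technical obstacle is the Schur transfer: lifting the scalar inequality on $\ell_2$ to a genuine operator inequality in $\M$ must be done so that the resulting constants are absolute (in particular, independent of $p$). This $p$-independence is exactly what the eventual application demands, since it is what permits the sharp martingale Burkholder--Gundy constants to be injected into lower bounds on $\alpha_p$ and $\beta_p$.
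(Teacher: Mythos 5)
Your proof is correct. The paper itself does not reprove Theorem~\ref{XZ} but cites \cite{XZ} for it; your construction --- taking $a_n$ so the Poisson generator has lacunary spectrum $\mu_n=q^n$ on $\mathrm{ran}(d\mathcal{E}_n)$, showing the resulting kernel $K_{nm}=\mu_n\mu_m/(\mu_n+\mu_m)^2$ is a small Schur-test perturbation of $\tfrac14 I$ on $\ell_2$, and lifting $cI\preceq K\preceq CI$ to the operator inequality $c\sum dx_n^*dx_n\preceq |G_c^{\widetilde P}(x)|^2\preceq C\sum dx_n^*dx_n$ via the standard positive-matrix factorization, with $p$-independent constants --- is a self-contained realization of exactly the strategy the paper attributes to \cite{XZ} (a conditional-expectation semigroup whose square function is pointwise comparable to the martingale square function).
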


\begin{corollary}\label{lower}
	Let $1<p<\infty$. Then 
	\begin{equation*}
		\alpha_p\gtrsim p\quad \text{and} \quad \beta_p\gtrsim \max\{p,p'\}.
	\end{equation*}
\end{corollary}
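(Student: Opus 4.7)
The plan is to test the noncommutative Littlewood-Paley-Stein inequality against the explicit subordinated Poisson semigroup $\{\widetilde{P}_t\}_{t>0}$ arising from the conditional-expectation semigroup in \eqref{eee3}, and then transfer the problem to one about the optimal constants in the noncommutative Burkholder-Gundy inequality, whose sharp orders are already recorded in Theorem \ref{JXR}.

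Concretely, I would proceed as follows. Fix any finite von Neumann algebra $\M$ with a filtration $\{\M_n\}_{n\geq 1}$, take the sequence $\{a_n\}$ provided by Theorem \ref{XZ}, and form the noncommutative symmetric diffusion semigroup $\{\widetilde{T}_t\}_{t>0}$ via \eqref{eee3} together with its subordinated Poisson semigroup $\{\widetilde{P}_t\}_{t>0}$. Since $\alpha_p$ and $\beta_p$ are the best constants valid uniformly across \emph{all} noncommutative symmetric diffusion semigroups, they in particular govern $\{\widetilde{P}_t\}_{t>0}$, yielding
$$\|x\|_p \le \alpha_p \|x\|_{p,\widetilde{P}} \quad \text{and}\quad \|x\|_{p,\widetilde{P}} \le \beta_p \|x\|_p$$
for every $x\in L_p(\M)$. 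Invoking Theorem \ref{XZ} to replace $\|x\|_{p,\widetilde{P}}$ by $\|x\|_{\mathcal{H}_p(\M)}$ up to absolute constants, one obtains
$$\|x\|_p \lesssim \alpha_p \|x\|_{\mathcal{H}_p(\M)} \quad \text{and}\quad \|x\|_{\mathcal{H}_p(\M)} \lesssim \beta_p \|x\|_p$$
for every $L_p$-martingale $x$ on the chosen filtered algebra.

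Choosing $(\M,\{\M_n\})$ together with an extremal martingale $x$ realizing the best constants $\gamma_p$ or $\theta_p$ in the noncommutative Burkholder-Gundy inequality (Theorem \ref{BGI}) then gives $\gamma_p \lesssim \alpha_p$ and $\theta_p \lesssim \beta_p$. The conclusion is now immediate from Theorem \ref{JXR}: since $\gamma_p \approx p$ as $p\to\infty$, we get $\alpha_p \gtrsim p$ (the inequality being trivial on any compact range of $p$ bounded away from $\infty$), and since $\theta_p \approx p$ as $p\to\infty$ while $\theta_p \approx p'$ as $p\to 1$, we obtain $\beta_p \gtrsim \max\{p,p'\}$. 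The substantive ingredients, namely the semigroup construction behind Theorem \ref{XZ} and the sharp orders of $\gamma_p$ and $\theta_p$, are both imported from prior work, so no serious obstacle is anticipated; the argument reduces to a careful comparison of best constants.
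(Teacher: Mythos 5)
Your proposal is correct and follows essentially the same route as the paper: instantiate the Littlewood-Paley-Stein inequality on the conditional-expectation semigroup of \eqref{eee3}, invoke Theorem \ref{XZ} to trade $\|\cdot\|_{p,\widetilde{P}}$ for $\|\cdot\|_{\mathcal{H}_p(\M)}$, deduce $\gamma_p\lesssim\alpha_p$ and $\theta_p\lesssim\beta_p$, and read off the orders from Theorem \ref{JXR}. The paper states this more tersely, but the underlying argument is identical.
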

\begin{proof}
	By Theorem \ref{XZ}, we see $\alpha_p\gtrsim \gamma_p$ and $\beta_p\gtrsim \theta_p$. Then by means of Theorem \ref{JXR}, we immediately derive the desired lower bounds of $\alpha_p$ and $\beta_p$ in \eqref{nlps}.
	\end{proof}


\subsection{Col-sectorial and Row-sectorial operators}
The reader is referred to \cite[Chapter 3,\,4]{JX} for more details of this subsection. We first give the definition of sectorial operators.
\begin{definition}
	Let $1<p<\infty$, and let $A$ be a (possibly unbounded) linear operator $A$ on $L_p(\mathcal{M})$. For any $\omega\in (0,\pi)$, let 
	\begin{equation*}
		\Sigma_\omega =\{z\in\mathbb{C}^*: |\mathrm{Arg}(z)|<\omega\}
	\end{equation*}
be the open sector of angle $2\omega$ around the half-line $(0,+\infty)$. Denote by $\sigma(A)$ the spectrum set of $A$. $A$ is called a sectorial operator of type $\omega$ if $A$ is closed and densely defined, $\sigma(A)\subset \overline{\Sigma_\omega}$, and for any $\theta\in (\omega,\pi)$ there is a constant $K_\theta>0$ such that 
\begin{equation}\label{Ktheta}
	\|z(z-A)^{-1}\|\le K_\theta,\quad z\in\mathbb{C}\backslash \overline{\Sigma_\theta}.
\end{equation}

\end{definition}

\begin{rem}
	 Let $\{T_t\}_{t>0}$ be a noncommutative symmetric diffusion semigroup on $\mathcal{M}$. Let $1<p<\8$.  If $A$ is the corresponding negative infinitesimal generator of $\{T_t\}_{t>0}$ on $L_p(\M)$, then $A$ is closed and densely defined. Moreover, $\sigma(A)\subset \overline{\Sigma_{\frac{\pi}{2}}}$, and for any $z\in \mathbb{C}\backslash \overline{\Sigma_{\frac{\pi}{2}}}$, we have the following Laplace formula:
	 \begin{equation}\label{Laplace}
	 	(z-A)^{-1}=-\int_0^\infty e^{tz}T_t dt
	 \end{equation}
 in the strong operator topology. It is easy to show that $A$ is a sectorial operator of type $\frac{\pi}{2}$. 
\end{rem}
\begin{rem}\label{rem217}
	Let $1< p<\infty$. Note that $\sqrt{A}$ is the negative infinitesimal generator of the subordinated Poisson semigroup $\{P_t\}_{t>0}$ associated with the noncommutative symmetric diffusion semigroup $T_t=\exp(-t A)$ $({t>0})$ on $L_p(\M)$. Then $\sqrt{A}$ is a sectorial operator on $L_p(\mathcal{M})$ as well. Since $L_p(\mathcal{M})$ is reflexive, it follows from \cite[Theorem 3.8]{CD} that $L_p(\mathcal{M})$ has a direct sum decomposition
	\begin{equation}\label{oplus}
		L_p(\mathcal{M})=\overline{\mathrm{im}\sqrt{A}}\oplus \mathrm{ker}\sqrt{A}.
	\end{equation}
	Moreover, if $x\in \mathrm{ker}\sqrt{A}$, then $\|G_c^{P}(x)\|_{p}=\|G_r^{P}(x)\|_{p}=0$. Hence in the sequel, we always assume that $\sqrt{A}$ has dense range.
\end{rem}

We give a brief review of $H^\infty$ functional calculus which will be used later. For any $\theta\in (0,\pi)$, denote by $H^\infty(\Sigma_\theta)$ the space of all bounded analytic functions $f: \Sigma_\theta\to \mathbb{C}$. This is a Banach algebra for the norm
	\begin{equation*}
		\|f\|_{\infty,\theta}=\sup\{|f(z)|:z\in \Sigma_\theta\}.
	\end{equation*}
    Besides, denote by $H_0^\infty(\Sigma_\theta)$ the subalgebra of all $f\in H^\infty(\Sigma_\theta)$ for which there exist two positive numbers $s,c>0$ such that 
    \begin{equation}\label{festi}
	    |f(z)|\le c\frac{|z|^s}{(1+|z|)^{2s}},\quad z\in \Sigma_\theta.
    \end{equation}

Let $1<p<\infty$ and let $A$ be a sectorial operator of type $\omega\in (0,\pi)$ on $L_p(\mathcal{M})$. Let $\omega<\gamma<\theta<\pi$ and denote by $\Gamma_\gamma$ the oriented contour defined by 
\begin{equation*}
	\begin{aligned}
		\Gamma_\gamma(t)=\begin{cases}
			-te^{\mathrm{i}\gamma},& t\in\mathbb{R}_-,\\
			te^{-\mathrm{i}\gamma},& t\in\mathbb{R}_+.
		\end{cases}
	\end{aligned}
\end{equation*}
In other words, $\Gamma_\gamma$ is the boundary of $\Sigma_\gamma$ oriented counterclockwise. For any $f\in H_0^\infty(\Sigma_\theta)$, set
\begin{equation}\label{fA}
	f(A)=\frac{1}{2\pi\mathrm{i}}\int_{\Gamma_\gamma} f(z)(z-A)^{-1} dz.
\end{equation}
From \eqref{Ktheta} and \eqref{festi}, we know that this integral is absolutely convergent. Moreover, the integral in \eqref{fA} does not depend on the choice of $\gamma\in (\omega,\theta)$, and the mapping $f\mapsto f(A)$ is an algebra homomorphism from $H_0^\infty(\Sigma_{\theta})$ into $B(L_p(\mathcal{M}))$ which is consistent with the functional calculus of rational functions. 

The following resolution of the identity will be helpful. If $f\in H_0^\infty(\Sigma_\theta)$ satisfies
\begin{equation*}
	\int_0^\infty f(t)\frac{dt}{t}=1,
\end{equation*}
then one has
\begin{equation}\label{xfA}
	x=\int_0^\infty f(tA)(x)\frac{dt}{t},\quad \forall \ x\in\overline{\mathrm{im}A}=L_p(\M).
\end{equation}
We refer the reader to \cite{CD, MH1} for more details on functional calculus.

The following definition is vital in the proof of Theorem \ref{main1}.
  \begin{definition}
  	Let $1\le p<\infty$. A set $\mathcal{F}\subset B(L_p(\mathcal{M}))$ is said to be Col-bounded (resp. Row-bounded) if there is a constant $C>0$ such that for any finite families $T_1,\cdots,T_n$ in $\mathcal{F}$, and $x_1,\cdots,x_n$ in $L_p(\mathcal{M})$, we have
  	\begin{equation}\label{ColT}
  		\|\{T_k(x_k)\}_{k=1}^n\|_{L_p(\mathcal{M};\ell_2^c)}\le C\|\{x_k\}_{k=1}^n\|_{L_p(\mathcal{M};\ell_2^c)}
  	\end{equation}
  \begin{equation}\label{RowT}
  	\big(\text{resp.}\,\,\|\{T_k(x_k)\}_{k=1}^n\|_{L_p(\mathcal{M};\ell_2^r)}\le C\|\{x_k\}_{k=1}^n\|_{L_p(\mathcal{M};\ell_2^r)}.\big)
  \end{equation}
The best constant $C$ satisfying \eqref{ColT} (resp. \eqref{RowT}) is denoted by $\mathrm{Col}(\mathcal{F})$ (resp. $\mathrm{Row}(\mathcal{F})$).
  \end{definition}

The following useful lemma in terms of Col-boundedness and Row-boundedness will be needed later (see \cite[Lemma 4.2]{JX}).
\begin{lemma}\label{TFbound}
	Let $1\le p<\infty$. Let $\mathcal{F}\subset B(L_p(\mathcal{M}))$ be a set of bounded operators. Let $I$ be an interval of $\mathbb{R}$ and $C>0$ be a constant. Define
	\begin{equation*}
		\mathcal{T}=\bigg\{ \int_I f(t)S(t)dt \ \Big| \ S:I\to \mathcal{F}\,\, \text{is continuous},\,\, f\in L_1(I,dt),\,\, \text{and}\,\, \int_I|f(t)|dt\le C \bigg\}.
	\end{equation*}	
	\\
	(1) If $\mathcal{F}$ is Col-bounded, then $\mathcal{T}$ is Col-bounded with $\mathrm{Col}(\mathcal{T})\le C\mathrm{Col}(\mathcal{F})$,
	\\
	(2) if $\mathcal{F}$ is Row-bounded, then $\mathcal{T}$ is Row-bounded with $\mathrm{Row}(\mathcal{T})\le C\mathrm{Row}(\mathcal{F})$.
\end{lemma}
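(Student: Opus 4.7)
The plan is to use a product-probability-measure trick to decouple the $n$ independently chosen integrands that appear in an arbitrary finite family drawn from $\mathcal{T}$. Fix $T_1,\ldots,T_n\in\mathcal{T}$, written as $T_k=\int_I f_k(t)S_k(t)\,dt$ with $\|f_k\|_1\le C$ and $S_k:I\to\mathcal{F}$ continuous, and pick $x_1,\ldots,x_n\in L_p(\mathcal{M})$. Indices $k$ with $\|f_k\|_1=0$ give $T_k=0$ and can be discarded, so I assume $\|f_k\|_1>0$. I would then factor $f_k(t)=\|f_k\|_1\,\omega_k(t)\,p_k(t)$, where $p_k(t):=|f_k(t)|/\|f_k\|_1$ is a probability density on $I$ and $\omega_k(t):=f_k(t)/|f_k(t)|$ on $\{f_k\neq 0\}$ (and $0$ elsewhere) is a unimodular measurable function.

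Form the product probability measure $\mu:=\prod_{k=1}^n p_k(t_k)\,dt_k$ on $I^n$. By Fubini the factors with index $\neq k$ integrate to $1$, giving
\begin{equation*}
T_k x_k=\|f_k\|_1\int_{I^n}\omega_k(t_k)\,S_k(t_k)\,x_k\,d\mu(\mathbf{t}),\qquad k=1,\dots,n,
\end{equation*}
and hence $\{T_k x_k\}_{k=1}^n=\int_{I^n}\{\|f_k\|_1\,\omega_k(t_k)\,S_k(t_k)x_k\}_{k=1}^n\,d\mu(\mathbf{t})$. The integrand is Bochner integrable (measurable, with uniformly bounded $\|S_k(t)\|$ since $\mathcal{F}$ is Col-bounded and hence operator-bounded), so Minkowski's inequality in the Banach space $L_p(\mathcal{M};\ell_2^c)$ pulls the norm inside the integral. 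For each fixed $\mathbf{t}\in I^n$ the operators $S_1(t_1),\ldots,S_n(t_n)$ lie in $\mathcal{F}$; absorbing the scalars $\|f_k\|_1\omega_k(t_k)$ into the $x_k$ and invoking Col-boundedness then yields
\begin{equation*}
\big\|\{\|f_k\|_1\,\omega_k(t_k)\,S_k(t_k)x_k\}_k\big\|_{L_p(\mathcal{M};\ell_2^c)}\le\mathrm{Col}(\mathcal{F})\,\big\|\{\|f_k\|_1\,\omega_k(t_k)\,x_k\}_k\big\|_{L_p(\mathcal{M};\ell_2^c)}.
\end{equation*}

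The right-hand norm is at most $C\,\|\{x_k\}_k\|_{L_p(\mathcal{M};\ell_2^c)}$ pointwise in $\mathbf{t}$: since $|\omega_k(t_k)|\le 1$ and $\|f_k\|_1\le C$, one has the operator inequality $\sum_k\|f_k\|_1^2|\omega_k(t_k)|^2 x_k^*x_k\le C^2\sum_k x_k^*x_k$, and operator monotonicity of the square root together with $\|\cdot\|_p$ gives the bound. As $\mu$ is a probability measure and the estimate is independent of $\mathbf{t}$, integration against $\mu$ preserves it, delivering $\mathrm{Col}(\mathcal{T})\le C\cdot\mathrm{Col}(\mathcal{F})$. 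The Row statement is entirely symmetric, with $x_k x_k^*$ replacing $x_k^* x_k$ throughout. The main subtlety to navigate is that the $T_k$'s involve independently chosen pairs $(f_k,S_k)$, so a naive Minkowski inequality in a single variable $t$ does not present a family inside $\mathcal{F}$ to which Col-boundedness can be applied; the product-measure device supplies a separate dummy variable $t_k$ for each index $k$, after which the hypothesis applies cleanly at every point of $I^n$.
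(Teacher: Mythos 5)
Your proof is correct. Note, however, that the paper does not prove this lemma itself; it cites it from Junge--Le Merdy--Xu as \cite[Lemma 4.2]{JX}, so there is no in-paper argument to compare against. That said, the product-probability-measure decoupling you use is exactly the standard mechanism for this kind of stability of Col/Row-bounded (classically, $R$-bounded) families under integral averaging: the independent dummy variables $t_1,\dots,t_n$ dispose of the genuine difficulty, namely that each $T_k$ comes with its own pair $(f_k,S_k)$, so a single-variable Minkowski inequality would not exhibit an $n$-tuple lying inside $\mathcal{F}$. The two points that require care are handled correctly in your argument: the scalars $a_k=\|f_k\|_1\,\omega_k(t_k)$ satisfy $|a_k|\le C$, so $0\le\sum_k|a_k|^2 x_k^*x_k\le C^2\sum_k x_k^*x_k$ and hence $\|\{a_k x_k\}_k\|_{L_p(\mathcal{M};\ell_2^c)}\le C\|\{x_k\}_k\|_{L_p(\mathcal{M};\ell_2^c)}$ by monotonicity of $\|\cdot\|_p$ on positive elements (Löwner--Heinz is a slightly heavier hammer than needed, but it certainly works); and the Bochner-integral triangle inequality against the probability measure $\mu$ on $I^n$ keeps the constant exactly $C\,\mathrm{Col}(\mathcal{F})$, with no losses. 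The discarding of indices with $\|f_k\|_1=0$ is harmless since it only shrinks the right-hand norm, and the row case is symmetric as you state.
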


We will now introduce operators which are Col-sectorial (resp. Row-sectorial) of Col-type (resp. Row-type).
\begin{definition}
	Let $1<p<\infty$. An operator $A$ on $L_p(\mathcal{M})$ is said to be Col-sectorial (resp. Row-sectorial) of Col-type (resp. Row-type) $\omega$, if the set 
	\begin{equation*}
		\{z(z-A)^{-1}: z\in \mathbb{C}\backslash \overline{\Sigma_\theta}\}
	\end{equation*}
is Col-bounded (resp. Row-bounded) for any $\theta\in (\omega,\pi)$.
\end{definition}

\begin{lem}\label{Aalpha}
	Let $1<p<\infty$ and let $A$ be a sectorial operator of type $\omega\in (0,\pi)$ on $L_p(\mathcal{M})$. For any positive real number $\alpha>0$, let $A^\alpha$ be the corresponding fractional power of $A$. Assume that $\alpha\omega<\pi$. If $A$ is Col-sectorial (resp. Row-sectorial) of Col-type (resp. Row-type) $\omega$, then $A^\alpha$ is Col-sectorial (resp. Row-sectorial) of Col-type (resp. Row-type) $\alpha\omega$.
\end{lem}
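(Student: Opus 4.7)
The plan is to show that for every $\theta \in (\alpha\omega, \pi)$, the family $\{z(z - A^\alpha)^{-1} : z \notin \overline{\Sigma_\theta}\}$ is Col-bounded; the Row case is entirely symmetric. The strategy is to represent each resolvent of $A^\alpha$ as a contour integral of resolvents of $A$, and then to invoke Lemma \ref{TFbound} to transfer Col-boundedness from one family to the other.

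Fix $\theta \in (\alpha\omega, \pi)$ and choose an intermediate angle $\gamma$ with $\omega < \gamma < \theta/\alpha$, so that $\lambda \mapsto \lambda^\alpha$ sends $\Sigma_\gamma$ into $\Sigma_{\alpha\gamma} \subset \Sigma_\theta$ and $z - \lambda^\alpha$ stays away from $0$ for $z \notin \overline{\Sigma_\theta}$ and $\lambda$ in a neighborhood of $\overline{\Sigma_\gamma}$. I would take $\Gamma$ to be a contour inside $\Sigma_\gamma$ that runs once around $\overline{\Sigma_\omega}$ but is bounded away from $0$, namely the two rays $\arg \lambda = \pm\gamma$, $|\lambda| \geq \varepsilon$, joined by a small arc of radius $\varepsilon$. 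Standard holomorphic functional-calculus arguments (\cite{CD, MH1}) then yield the representation
$$
(z - A^\alpha)^{-1} = \frac{1}{2\pi i} \int_\Gamma \frac{(\lambda - A)^{-1}}{z - \lambda^\alpha}\, d\lambda,
$$
converging absolutely in operator norm thanks to $\|(\lambda - A)^{-1}\| = O(|\lambda|^{-1})$ together with $|z - \lambda^\alpha|^{-1} = O(|\lambda|^{-\alpha})$ at infinity, while the small arc contributes a bounded term.

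The next step is to factor $(\lambda - A)^{-1} = \lambda^{-1}\,\lambda(\lambda - A)^{-1}$ and rewrite
$$
z(z - A^\alpha)^{-1} = \int_\Gamma \frac{z}{2\pi i\, \lambda\,(z - \lambda^\alpha)} \cdot \bigl[\lambda(\lambda - A)^{-1}\bigr]\, d\lambda.
$$
Because $\gamma > \omega$, the Col-sectoriality hypothesis on $A$ implies that $\mathcal{F} := \{\lambda(\lambda - A)^{-1} : \lambda \in \Gamma\}$ is Col-bounded. The change of variables $\lambda = |z|^{1/\alpha} \mu$ makes the integral along each ray independent of $|z|$, and the remaining compact dependence on $\arg z \in \{\varphi : \theta \leq |\varphi| \leq \pi\}$ is continuous, so
$$
\sup_{z \notin \overline{\Sigma_\theta}} \int_\Gamma \left|\frac{z}{\lambda\,(z - \lambda^\alpha)}\right| |d\lambda| \leq C_{\gamma, \theta, \alpha} < \infty.
$$
Applying Lemma \ref{TFbound} to this decomposition then yields $\mathrm{Col}(\{z(z - A^\alpha)^{-1} : z \notin \overline{\Sigma_\theta}\}) \leq C_{\gamma, \theta, \alpha}\, \mathrm{Col}(\mathcal{F})$, which is precisely the Col-sectoriality of $A^\alpha$ at angle $\alpha\omega$.

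The main technical obstacle is arranging the contour so that both integrals above make sense. A naive straight contour $\Gamma_\gamma$ through the origin fails on both sides at once: near $\lambda = 0$ the resolvent bound $\|(\lambda - A)^{-1}\| = O(|\lambda|^{-1})$ prevents absolute convergence of the operator-valued integral on a one-dimensional curve through $0$, and the scalar weight $|z|/|\lambda(z-\lambda^\alpha)|$ acquires a non-integrable $1/|\lambda|$ singularity there. The small-arc deformation cures both problems, but one must then verify that the integral representation is insensitive to the arc radius $\varepsilon$ while retaining uniform control in $z$; this bookkeeping, together with checking that the scaling $\lambda \mapsto |z|^{1/\alpha}\mu$ interacts cleanly with the arc piece, is where the real care lies.
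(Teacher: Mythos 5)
Your overall strategy --- representing $z(z-A^\alpha)^{-1}$ as a contour integral of the Col-bounded family $\{\lambda(\lambda-A)^{-1}\}$ against a scalar kernel and then invoking Lemma~\ref{TFbound} --- is the same framework the paper uses. But the specific decomposition you chose does not deliver the uniform scalar bound that Lemma~\ref{TFbound} requires, and the difficulty you flag at the end is not mere bookkeeping: it is fatal as stated.

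The problematic claim is
\[
\sup_{z\notin\overline{\Sigma_\theta}}\int_\Gamma\left|\frac{z}{\lambda\,(z-\lambda^\alpha)}\right||d\lambda|<\infty.
\]
On the rays $\lambda=te^{\pm i\gamma}$ one has $|z-t^\alpha e^{\pm i\alpha\gamma}|\approx\max(|z|,t^\alpha)$ uniformly for $z\notin\overline{\Sigma_\theta}$, so for any fixed arc radius $\varepsilon>0$,
\[
\int_\varepsilon^\infty\frac{|z|}{t\,|z-t^\alpha e^{\pm i\alpha\gamma}|}\,dt
\approx\int_\varepsilon^{|z|^{1/\alpha}}\frac{dt}{t}+\int_{|z|^{1/\alpha}}^\infty\frac{|z|}{t^{1+\alpha}}\,dt
=\frac{1}{\alpha}\ln|z|-\ln\varepsilon+\frac{1}{\alpha},
\]
which diverges as $|z|\to\infty$. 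Your substitution $\lambda=|z|^{1/\alpha}\mu$ does not rescue this, because it rescales the arc to radius $\varepsilon|z|^{-1/\alpha}$: with the arc the scalar integral is finite but not $z$-uniform, and without the arc the kernel $\frac{1}{\lambda(z-\lambda^\alpha)}\sim\frac{1}{z\lambda}$ has a non-integrable $1/|\lambda|$ singularity at $0$. So the two failure modes you identified are not independent obstacles that a clever choice of $\varepsilon$ circumvents; they trade off against each other and one always loses.

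The paper sidesteps this by subtracting off the principal part before integrating. It writes
\[
z(z-A^\alpha)^{-1}=z^{1/\alpha}\big(z^{1/\alpha}-A\big)^{-1}+\varphi(A),
\qquad
\varphi(\lambda)=\frac{z^{1/\alpha}\lambda^\alpha-z\lambda}{(z-\lambda^\alpha)(z^{1/\alpha}-\lambda)}.
\]
The first term is controlled directly by the Col-sectoriality of $A$, since $z^{1/\alpha}$ ranges over $\mathbb{C}\setminus\overline{\Sigma_{\theta/\alpha}}$. The correction $\varphi$ now vanishes at $\lambda=0$ and decays at $\infty$: after the scale-invariant substitution $\zeta=z^{-1/\alpha}\lambda$ one finds
\[
\varphi\big(z^{1/\alpha}\zeta\big)=\frac{\zeta^\alpha-\zeta}{(1-\zeta^\alpha)(1-\zeta)}=O\big(\zeta^{\min(1,\alpha)}\big)\ \text{at }0,\qquad O\big(\zeta^{-\min(1,\alpha)}\big)\ \text{at }\infty,
\]
so $\int_\Gamma|\varphi(\lambda)|\,\frac{|d\lambda|}{|\lambda|}$ converges absolutely on a straight contour $\Gamma=\partial\Sigma_{\gamma'/\alpha}$ and is independent of $|z|$; Lemma~\ref{TFbound} then applies cleanly with no arc and no $\varepsilon$. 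If you want to salvage your version, you must build in some such regularization --- subtracting $z^{1/\alpha}(z^{1/\alpha}-A)^{-1}$ as the paper does, or an equivalent term like $f(0)(1+A)^{-1}$ --- so that the remaining scalar kernel is genuinely integrable across $\lambda=0$ on a contour through the origin.
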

\begin{proof}
	Let $\alpha \omega<\gamma<\pi$. We first prove that $\{z(z-A^\alpha)^{-1}:z\in \mathbb{C}\backslash\overline{\Sigma_\gamma}\}$ is Col-bounded. For any given $z\in \mathbb{C}\backslash\overline{\Sigma_\gamma}$, we decompose $z(z-A^\alpha)^{-1}$ into two parts:
	\begin{equation*}
		z(z-A^\alpha)^{-1}=z^{\frac{1}{\alpha}}(z^{\frac{1}{\alpha}}-A)^{-1}+\varphi(A),
	\end{equation*}
	where 
	\begin{equation*}
		\varphi(\lambda)=(z^{\frac{1}{\alpha}}\lambda^{\alpha}-z\lambda)(z-\lambda^{\alpha})^{-1}(z^{\frac{1}{\alpha}}-\lambda)^{-1}.
	\end{equation*}
Since $A$ is Col-sectorial of Col-type $\omega$, $\{z^{\frac{1}{\alpha}}(z^{\frac{1}{\alpha}}-A)^{-1}:z\in \mathbb{C}\backslash\overline{\Sigma_\gamma}\}$ is Col-bounded. Note that $\varphi$ is analytic in $\Sigma_{\frac{\gamma}{\alpha}}$. Let $\alpha\omega<\gamma'<\gamma$ and $\Gamma$ be the boundary of $\Sigma_{\frac{\gamma'}{\alpha}}$. Then
	\begin{equation*}
		\varphi(A)=\frac{1}{2\pi\mathrm{i}}\int_{\Gamma}\varphi(\lambda)(\lambda-A)^{-1}d\lambda.
	\end{equation*}
	Notice that the change of variable $\zeta=z^{-\frac{1}{\alpha}}\lambda$ yields
	\begin{equation*}
		\int_{\Gamma}|\varphi(\lambda)|\frac{|d\lambda|}{|\lambda|}\lesssim \frac{1}{\alpha}+\frac{1}{(\gamma-\gamma')^{2}}.
	\end{equation*}
	Hence using Lemma \ref{TFbound}, we have $\{\varphi(A):z\in \mathbb{C}\backslash\overline{\Sigma_\gamma}\}$ is also Col-bounded, which implies that $\{z(z-A^\alpha)^{-1}:z\in \mathbb{C}\backslash\overline{\Sigma_\gamma}\}$ is Col-bounded. Therefore $A^\alpha$ is Col-sectorial of Col-type $\alpha\omega$. In the same way, $A^\alpha$ is Row-sectorial of Row-type $\alpha\omega$.
\end{proof}

\subsection{Singular integral operators}
Let $\Delta$ be the diagonal of $\mathbb{R}^n\times \mathbb{R}^n$ and $H$ be a Hilbert space. We first give the definition of singular integral operators.
\begin{definition}
	An integral operator $T$ associated with a kernel $k:\mathbb{R}^{2n}\backslash\Delta\to H$ is called a singular integral operator, if for any smooth test function $f$ with compact support, we have
	\begin{equation*}
		Tf(x)=\int_{\mathbb{R}^n}k(x,y) f(y)dy,\quad \forall \ x\notin \mathrm{supp}f,
	\end{equation*}
    and the kernel satisfies size and smoothness conditions:
  \begin{equation}\label{smooth}
  	\begin{cases}
  		\|k(x,y)\|_H\le \frac{C}{|x-y|^n},\\
  		\|k(x,y)-k(x',y)\|_H+\|k(y,x)-k(y,x')\|_H\le \frac{C |x-x'|^\alpha}{|x-y|^{n+\alpha}}
  	\end{cases}
  \end{equation}
  for all $x,x',y\in\mathbb{R}^n$ with $|x-y|>2|x-x'|>0$ and some fixed $\alpha\in (0, 1]$ and constant $C>0$. 
\end{definition}

Let $\mathcal{M}$ be a von Neumann algebra equipped with a normal semifinite faithful trace $\tau$. Denote by $\A=L_\8(\mathbb{R}^n)\otimes \M$. Mei and Parcet proved the $L_p$-boundedness of $H$-valued singular integral operators in the semicommutative setting (see \cite[Theorem B2]{MP}).
\begin{thm}\label{singular}
	 Assume that the singular integral operator $T$ defines a bounded map from $L_2(\mathcal{A})$ to $L_2(\mathcal{A};H^r)=L_2(\mathcal{A};H^c)$. Then for $1<p<\infty$ and $f\in L_p(\mathcal{A})$, we have 
	\begin{equation*}
		\|Tf\|_{L_p(\mathcal{A};H^{r\cap c})}\le c_p\|f\|_{L_p(\mathcal{A})}, \quad\text{when}\,\, 2\le p<\infty,
	\end{equation*}
and
    \begin{equation*}
    	\|Tf\|_{L_p(\mathcal{A};H^{r+ c})}\le c_p\|f\|_{L_p(\mathcal{A})}, \quad\text{when}\,\, 1< p<2,
    \end{equation*}
where $c_p>0$ is the best relevant constants. Moreover, $c_p\approx p'$ as $p\to 1$ and $c_p\approx p$ as $p\to \infty$.
\end{thm}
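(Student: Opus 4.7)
The plan is to prove this by building the Hilbert-space-valued analogue of Parcet's noncommutative Calderón–Zygmund theory, following the general strategy of \cite{MP}. One first proves the column-case $L_p$-boundedness for $p\geq 2$ with constant $\lesssim p$, then transports the bound to row spaces and to $1<p<2$ by duality and symmetry. The entire proof is thus organized around a single endpoint estimate plus interpolation.

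The first main step is to establish the column endpoint. Starting from the hypothesis that $T:L_2(\A)\to L_2(\A;H^c)$ is bounded, I would introduce a column-type $\BMO$ space $\BMO^c(\A;H)$, whose norm controls $\sup_Q \|\mathcal{E}_Q(|Tf(x)-\mathcal{E}_Q Tf|^2)^{1/2}\|_\M$ over cubes $Q\subset \mathbb{R}^n$, and use the smoothness condition in (\ref{smooth}) to show that $T$ maps $L_\infty(\A)$ into $\BMO^c(\A;H)$ boundedly. The size/smoothness estimates on $k$ make this argument formally identical to the scalar case, once one treats $k(x,y)$ as an element of $H$ and uses the Cauchy–Schwarz trick $\|\cdot\|_H^2 = \langle\cdot,\cdot\rangle_H$ to linearize norms. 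Then Mei's $\BMO^c$–$L_p$ interpolation theorem yields
\[
\|Tf\|_{L_p(\A;H^c)} \lesssim p\,\|f\|_{L_p(\A)},\qquad 2\leq p<\infty,
\]
with the correct order $p$ as $p\to\infty$, since the $\BMO$–$L_p$ interpolation constant is linear in $p$.

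Next I would handle the row space by symmetry. The adjoint $T^*$ has kernel $k^*(y,x)=k(x,y)$ satisfying the same size and smoothness conditions, and the hypothesis gives $T^*:L_2(\A;H^c)\to L_2(\A)$. Running the same $\BMO$-interpolation argument on the $H$-valued adjoint, or equivalently on the operator $f\mapsto (Tf)^*$, yields $\|Tf\|_{L_p(\A;H^r)}\lesssim p\,\|f\|_{L_p(\A)}$ for $p\geq 2$. Combining the two bounds gives the desired intersection estimate, with $c_p\lesssim p$ as $p\to\infty$. For $1<p<2$ I would appeal to the duality (\ref{duality2}): since $L_p(\A;H^{r+c})^*=L_{p'}(\A;H^{r\cap c})$, a bound $T^*:L_{p'}(\A;H^{r\cap c})\to L_{p'}(\A)$ (obtained by the previous step applied to $T^*$, whose kernel shares the same conditions) dualizes to $T:L_p(\A)\to L_p(\A;H^{r+c})$ with constant $\lesssim p'$, matching the claim $c_p\approx p'$ as $p\to 1$.

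The main obstacle will be the Hilbert-space-valued endpoint estimate, i.e.\ making Parcet's noncommutative Calderón–Zygmund decomposition (or the $\BMO^c$–$L_p$ interpolation) compatible with kernels valued in $H$ rather than $\mathbb{C}$, while correctly distinguishing the column and row norms that arise when one squares out $|k(x,y)|$. In particular, proving $T:L_\infty(\A)\to \BMO^c(\A;H)$ requires one to pair a Hilbert-valued kernel against the vector test function $\chi_{Q^c}(f-\mathcal{E}_Q f)$ and to control the resulting double integral by the Hörmander condition on $k$, without losing the column/row asymmetry. Once this endpoint is correctly set up, Mei's interpolation machinery and duality produce both the full range of $p$ and the sharp order of $c_p$, with the scalar commutative limit of the constant being consistent with the classical $p,p'$ asymptotics of Calderón–Zygmund operators.
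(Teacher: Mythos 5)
First, note that the paper does not prove this statement at all: it is quoted verbatim from Mei--Parcet \cite[Theorem B2]{MP} and used as a black box in Section \ref{est1}, so there is no internal proof to compare against. Your outline does reproduce the strategy of the cited source in broad strokes (an $L_\infty(\A)\to \BMO^c(\A;H)$ endpoint from the size/smoothness conditions \eqref{smooth}, Mei's $\BMO$--$L_p$ interpolation with constant linear in $p$ for $2\le p<\infty$, the row case by applying the column case to the conjugate kernel and $f\mapsto f^*$ -- legitimate here precisely because the kernel is $H$-valued and hence commutes with $\M$ -- and duality for $1<p<2$).

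There is, however, a genuine gap in your $1<p<2$ step. You claim the bound $T^*:L_{p'}(\A;H^{r\cap c})\to L_{p'}(\A)$ is ``obtained by the previous step applied to $T^*$, whose kernel shares the same conditions.'' But $T^*$ is not an operator of the type you treated: it maps $H$-valued (operator-valued) functions to $\M$-valued functions, with kernel valued in $B(H,\mathbb{C})$, whereas your endpoint and interpolation were established only for maps $L_p(\A)\to L_p(\A;H^c)$ (resp.\ $H^r$). To dualize you need a separate endpoint estimate for the adjoint, e.g.\ $L_\infty(\A;H^{r\cap c})\to \BMO(\A)$, in which the Cauchy--Schwarz/H\"ormander argument must be run against an operator-valued input $g$ controlled simultaneously in the row and column senses -- this is exactly where the intersection norm on the dual side (and hence the sum norm $H^{r+c}$ for $Tf$ when $p<2$) enters, and it does not follow formally from the column result for $T$. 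An alternative route is a weak-type $(1,1)$ or $\mathcal{H}_1$-type estimate via the noncommutative Calder\'on--Zygmund decomposition, again giving order $p'$; either way this step needs its own proof. Finally, the statement asserts two-sided asymptotics $c_p\approx p$ and $c_p\approx p'$; your sketch only addresses the upper bounds, though the lower bounds are cheap (already the scalar case $\M=\mathbb{C}$, $H=\mathbb{C}$ with the Hilbert transform gives $c_p\gtrsim\max\{p,p'\}$), so this omission is minor compared with the duality gap.
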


\bigskip

\section{Estimates of $\alpha_p$ and $\beta_p$ where $2\le p<\infty$}\label{est1}

This section focuses on the proof of Theorem \ref{main1} where $2\le p<\infty$. We divide the proof into two parts. In the first, we deal with the translation group on $L_p(\mathbb{R},L_p(\mathcal{M}))$ for $2\le p<\infty$ and derive the corresponding semicommutative Littlewood-Paley-Stein inequality for its associated Poisson subordinated semigroup. In this case, we exploit techniques (i.e. Theorem \ref{singular}) from the semicommutative singular integral operator theory established in \cite{MP}. In the second part, by virtue of the dilatable property (i.e. Theorem \ref{thm210}), we transfer general noncommutative symmetric diffusion semigroups to this special case where $\{T_t\}_{t>0}$ is the translation group on $L_p(\mathbb{R},L_p(\mathcal{M}))$, and finally obtain an upper bound of noncommutative Littlewood-Paley-Stein inequalities for their associated Poisson subordinated semigroups.

\subsection{The case of the translation group}
 
 Let $\tau_t$ be the translation by $t$ on $L_p(\mathbb{R})$:
 \begin{equation*}
 	\tau_{t}(f)(s)=f(s+t),\quad \forall \ s\in\mathbb{R}.
 \end{equation*} 
Then $\{\tau_t\}_{t\in\mathbb{R}}$ is a strongly continuous group of positive isometries on $L_p(\mathbb{R})$. It is well-known that $\{\tau_t\}_{t\in\mathbb{R}}$ extends to a group of isometries on $L_p(\mathbb{R},L_p(\mathcal{M}))$ as well. Let $\{P_t^\tau\}_{t>0}$ be its associated Poisson subordinated semigroup. In the following, we aim to show that when $p\ge 2$,  for any $ f\in L_p(\mathbb{R},L_p(\mathcal{M}))$,
\begin{equation}\label{step1}
	\bigg\|\biggl(\int_0^\infty\big|t\partial P^\tau_t(f)\big|^2\frac{dt}{t}\biggr)^{1/2}\bigg\|_{L_p(\mathbb{R},L_p(\mathcal{M}))}\lesssim p \|f\|_{L_p(\mathbb{R},L_p(\mathcal{M}))},
\end{equation}
and
\begin{equation}\label{step111}
\bigg\|\biggl(\int_0^\infty\big|t\partial P^\tau_t(f)^*\big|^2\frac{dt}{t}\biggr)^{1/2}\bigg\|_{L_p(\mathbb{R},L_p(\mathcal{M}))}\lesssim p \|f\|_{L_p(\mathbb{R},L_p(\mathcal{M}))}.
\end{equation}

\subsubsection*{Step 1: Estimates of square functions associated with the translation group}	
 For any given $f\in L_p(\mathbb{R},L_p(\mathcal{M}))$, since $\tau_t$ is the translation by $t$ on $L_p(\mathbb{R},L_p(\mathcal{M}))$, we express $t\partial P_t^\tau$ as a convolution operator
\begin{equation*}
	\sqrt{t}\partial P_{\sqrt{t}}^\tau (f)(s)=\phi_t\ast f(s)=\int_{\mathbb{R}}\phi_t(s-r)f(r)dr,\quad \forall \ s\in\mathbb{R},\,\, t>0,
\end{equation*}
where $\phi$ is the function with Fourier transform
$$\quad \widehat{\phi}(\xi)=-\sqrt{-2 \pi \mathrm{i} \xi} e^{-\sqrt{-2 \pi \mathrm{i} \xi}}=-\sqrt{2 \pi|\xi|} e^{-\frac{i \operatorname{sgn}(\xi) \pi}{4}} \exp \left(-\sqrt{2 \pi|\xi|} e^{-\frac{i \operatorname{sgn}(\xi) \pi}{4}}\right), $$
and $\phi_t(x)=\frac{1}{t}\phi(\frac{x}{t})$ for $x\in\mathbb{R}$ and $t>0$ (see \cite[Section 7]{X}). Then
\begin{equation}\label{step11}
\begin{aligned}
	\biggl(\int_0^\infty\big|t\partial P^\tau_t(f)\big|^2\frac{dt}{t}\biggr)^{1/2}{}&=\frac{\sqrt{2}}{2}\biggl(\int_0^\infty\big|\sqrt{t}\partial P^\tau_{\sqrt{t}}(f)\big|^2\frac{dt}{t}\biggr)^{1/2}=\frac{\sqrt{2}}{2}\biggl(\int_0^\infty|\phi_t\ast f|^2\frac{dt}{t}\biggr)^{1/2}.
\end{aligned}
\end{equation}
This implies that
\begin{equation*}
	\begin{aligned}
		\bigg\|\biggl(\int_0^\infty\big|t\partial P^\tau_t(f)\big|^2\frac{dt}{t}\biggr)^{1/2}\bigg\|_{L_p(\mathbb{R},L_p(\mathcal{M}))}{}
		&=\frac{\sqrt{2}}{2}\|\{\phi_{t}\ast f\}_{t>0}\|_{L_p(L_\infty(\mathbb{R})\otimes\mathcal{M};(L_2(\mathbb{R}_+,\frac{dt}{t}))^c)}.
	\end{aligned}
\end{equation*}
Let $H=L_2(\mathbb{R}_+,\frac{dt}{t})$ and define $$Tf=\{\phi_t\ast f\}_{t>0}, \quad \forall \ f\in L_p(\mathbb{R},L_p(\mathcal{M})). $$
In \cite[Section 7]{X}, it has been shown that
\begin{equation*}
	|\phi(x)|\lesssim \frac{1}{(1+|x|)^{3/2}} \quad \text{and} \quad |\phi'(x)|\lesssim \frac{1}{(1+|x|)^{5/2}},\quad \forall \ x\in\mathbb{R}.
\end{equation*}
By means of the above inequalities, we verify that $\{\phi_t\}_{t>0}$ satisfies \eqref{smooth}. Note also that $T$ is a bounded map from $L_2(\mathbb{R},L_2(\mathcal{M}))$ to $L_2(L_\infty(\mathbb{R})\otimes\mathcal{M};(L_2(\mathbb{R}_+,\frac{dt}{t}))^c)$. Hence from Theorem \ref{singular}, one has for $p\geq 2$
\begin{equation*}
	\|\{\phi_{t}\ast f\}_{t>0}\|_{L_p(L_\infty(\mathbb{R})\otimes\mathcal{M};(L_2(\mathbb{R}_+,\frac{dt}{t}))^{r+c})}\lesssim p\|f\|_{L_p(\mathbb{R},L_p(\mathcal{M}))}.
\end{equation*}
As a result, we obtain
\begin{equation*}
	\begin{aligned}
		\bigg\|\biggl(\int_0^\infty\big|t\partial P^\tau_t(f)\big|^2\frac{dt}{t}\biggr)^{1/2}\bigg\|_{L_p(\mathbb{R},L_p(\mathcal{M}))}+\bigg\|\biggl(\int_0^\infty\big|t\partial P^\tau_t(f)^*\big|^2\frac{dt}{t}\biggr)^{1/2}\bigg\|_{L_p(\mathbb{R},L_p(\mathcal{M}))}\lesssim p\|f\|_{L_p(\mathbb{R},L_p(\mathcal{M}))}.
	\end{aligned}
\end{equation*}
This finishes the proof of (\ref{step1}) and (\ref{step111}).

	
\subsection{The case of noncommutative symmetric diffusion semigroups}
\subsubsection*{Step 2: Transference}	
We utilize the transference method to transfer the general case to the special one of the translation group. To this end, we first need to dilate our semigroup $\{T_t\}_{t>0}$ to a group of invertible isometries. More precisely, since $\{T_t\}_{t>0}$ is a noncommutative symmetric diffusion semigroup, by Theorem \ref{thm210} we see that for any $1<p<\8$, there exists a noncommutative $L^p$-space $L^p(\N)$, a strongly continuous group of completely positive invertible isometries $U_t: L^p(\N) \rightarrow L^p(\N)$ and two completely positive contractions $J: L^p(\M) \rightarrow L^p(\N)$ and $P: L^p(\N) \rightarrow L^p(\M)$ such that
$$
T_t=P U_t J, \quad \forall \ t > 0.
$$
By the above dilation, we can assume that $\{T_t\}_{t>0}$ itself is a group of completely positive invertible isometries on $L_p(\mathcal{M})$ for $1< p<\infty$.
Recall that for any $x\in L_p(\mathcal{M})$,
\[G_c^{P}(x)=\biggl(\int_0^\infty \big|t\partial P_t(x)\big|^2\frac{dt}{t}\biggr)^{1/2} \quad\text{and}\quad G_r^{P}(x)=\biggl(\int_0^\infty \big|t\partial P_t(x)^*\big|^2\frac{dt}{t}\biggr)^{1/2}.\]
We first prove that for $p\geq 2$
\[\|G_c^{P}(x)\|_{p}\lesssim p\|x\|_p, \quad\forall \ x\in L_p(\mathcal{M}).\]
Let $x\in L_p(\mathcal{M})$ with $\|x\|_p=1$.
Denote the ergodic averages of $\{T_t\}_{t>0}$ by
\begin{equation*}
	M_t=\frac{1}{t}\int_0^t T_s ds,\quad \forall \ t>0, 
\end{equation*}
and $\{M_t^\tau\}_{t>0}$ are the corresponding ergodic averages of the translation group $\{\tau_t\}_{t>0}$.
From \eqref{Ptdef}, we write
\begin{equation*}
	P_t=\frac{1}{2\sqrt{\pi}}\int_0^\infty\frac{t}{s^{3/2}}\exp\big(-\frac{t^2}{4s}\big)T_s ds,\quad \forall \ t>0.
\end{equation*}
This implies that
\begin{equation}\label{tPt}
	\begin{aligned}
		t\partial P_t{}&=\frac{1}{2\sqrt{\pi}}\int_0^\infty\big(\frac{t}{s^{3/2}}-\frac{t^3}{2s^{5/2}}\big)\exp\big(-\frac{t^2}{4s}\big)T_s ds\\
		&=\frac{1}{2\sqrt{\pi}}\int_0^\infty\big(\frac{t}{s^{3/2}}-\frac{t^3}{2s^{5/2}}\big)\exp\big(-\frac{t^2}{4s}\big)(sM_s)' ds\\
		&=\int_0^\infty \varphi(\frac{t}{\sqrt{s}})M_s \frac{ds}{s}\\
		&=\int_0^\infty \varphi(\frac{1}{\sqrt{s}})M_{t^2s} \frac{ds}{s},
	\end{aligned}
\end{equation}
where
\[\varphi(r)=\frac{1}{16\sqrt{\pi}}(12r-12r^3+r^5)e^{-r^2/4},\quad \forall \ r>0.\]
Note that $\varphi(r)$ decays rapidly as $r\rightarrow \8$. Then there exists $N>0$ and $C>0$ such that for any $a>N$,
\begin{equation}\label{intphi}
	\int_a^\infty\Big|\varphi(\frac{1}{\sqrt{s}})\Big|\frac{ds}{s}\le \frac{C}{\sqrt{a}}.
\end{equation}
In the following, we assume that $a$ is large enough to satisfy (\ref{intphi}). By the triangle inequality, one has
	\begin{equation*}
		\begin{aligned}
			\|G_c^{P}(x)\|_{p}{}
			&=\biggl\|\biggl(\int_0^\infty \bigg|\int_0^\infty\varphi(\frac{1}{\sqrt{s}})M_{t^2s}(x)\frac{ds}{s}\bigg|^2\frac{dt}{t}\biggr)^{1/2}\biggr\|_{p}\\
			&\le \lim_{b\to\infty}\biggl\|\biggl(\int_{b^{-1}}^b \bigg|\int_0^a\varphi(\frac{1}{\sqrt{s}})M_{t^2s}(x)\frac{ds}{s}\bigg|^2\frac{dt}{t}\biggr)^{1/2}\biggr\|_{p}\\
			&\quad\quad+\lim_{b\to\infty}\biggl\|\biggl(\int_{b^{-1}}^b \bigg|\int_a^\infty\varphi(\frac{1}{\sqrt{s}})M_{t^2s}(x)\frac{ds}{s}\bigg|^2\frac{dt}{t}\biggr)^{1/2}\biggr\|_{p}.
		\end{aligned}
	\end{equation*}
	Denote by
	\begin{equation}\label{caseI}
		\mathrm{I}_{b, a}=\biggl\|\biggl(\int_{b^{-1}}^b \bigg|\int_0^a\varphi(\frac{1}{\sqrt{s}})M_{t^2s}(x)\frac{ds}{s}\bigg|^2\frac{dt}{t}\biggr)^{1/2}\biggr\|_{p}
	\end{equation}
	and
	\begin{equation}\label{caseII}
		\mathrm{II}_{b, a}=\biggl\|\biggl(\int_{b^{-1}}^b \bigg|\int_a^\infty\varphi(\frac{1}{\sqrt{s}})M_{t^2s}(x)\frac{ds}{s}\bigg|^2\frac{dt}{t}\biggr)^{1/2}\biggr\|_{p},
	\end{equation}
	where $b$ is a positive constant large enough. We are about to estimate $\text{I}_{b, a}$ and $\text{II}_{b, a}$ respectively. We start with estimating $\text{II}_{b, a}$.
	
\subsubsection*{Step 3: Estimate of \,$\mathrm{II}_{b, a}$}	
	Let
	\[M^*(x)=\sup_{v>0}\|M_v(x)\|_{p}.\]
	Since $T_t$ ($t>0$) is an isometry, by the triangle inequality, we obtain
	\begin{equation*}
		\begin{aligned}
			M^*(x)\le \sup_{v>0}\frac{1}{v}\int_0^v \|T_s(x)\|_{p}ds=\|x\|_{p}=1.
		\end{aligned}
	\end{equation*}
 From \eqref{intphi}, we deduce
	\begin{equation}\label{case1canp}
		\biggl\|\int_a^\infty \varphi(\frac{1}{\sqrt{s}})M_{t^2s}(x)\frac{ds}{s}\biggr\|_{p}\le \frac{C}{\sqrt{a}}M^*(x)\le \frac{C}{\sqrt{a}}.
	\end{equation}
Hence one has
	\begin{equation}\label{IIsum}
		\begin{aligned}
			\mathrm{II}_{b, a}{}&=\biggl\|\int_{b^{-1}}^b \bigg|\int_a^\infty\varphi(\frac{1}{\sqrt{s}})M_{t^2s}(x)\frac{ds}{s}\bigg|^2\frac{dt}{t}\biggr\|^{1/2}_{p/2}\\
			&\le \biggl(\int_{b^{-1}}^b \bigg\|\int_a^\infty\varphi(\frac{1}{\sqrt{s}})M_{t^2s}(x)\frac{ds}{s}\bigg\|^2_{p}\frac{dt}{t}\biggr)^{1/2}\le \frac{(2\ln b)^{1/2}C}{\sqrt{a}}.
		\end{aligned}
	\end{equation}

\subsubsection*{Step 4: Estimate of \,$\mathrm{I}_{b, a}$}	
Note that $\{T_t\}_{t>0}$ is a group of completely positive invertible isometries on $L_p(\mathcal{M})$. This implies that $T_t\otimes \text{Id}_{L_2(\mathbb{R}_+,\frac{dt}{t})}$ ($t>0$) is  isometric on $L_p(\mathcal{M};(L_2(\mathbb{R}_+,\frac{dt}{t}))^c)$. Let $c>0$ be a constant. Thus for any $0<u\le c$, one has
	\begin{equation*}
		\begin{aligned}
			\mathrm{I}_{b, a}{}&=\biggl\|\biggl(\int_{b^{-1}}^b \bigg|\int_0^a\varphi(\frac{1}{\sqrt{s}})M_{t^2s}(x)\frac{ds}{s}\bigg|^2\frac{dt}{t}\biggr)^{1/2}\biggr\|_{p}\\
			&=\biggl\|\bigg\{\int_0^a\varphi(\frac{1}{\sqrt{s}})\mathbbm{1}_{[b^{-1},b]}(t)M_{t^2s}(x)\frac{ds}{s}\bigg\}_{t>0}\biggr\|_{L_p(\mathcal{M};(L_2(\mathbb{R}_+,\frac{dt}{t}))^c)}\\
			&=\biggl\|\bigg\{\int_0^a\varphi(\frac{1}{\sqrt{s}})\mathbbm{1}_{[b^{-1},b]}(t)T_uM_{t^2s}(x)\frac{ds}{s}\bigg\}_{t>0}\biggr\|_{L_p(\mathcal{M};(L_2(\mathbb{R}_+,\frac{dt}{t}))^c)}\\
			&=\biggl\|\biggl(\int_{b^{-1}}^b \bigg|\int_0^a\varphi(\frac{1}{\sqrt{s}})M_{t^2s}T_u(x)\frac{ds}{s}\bigg|^2\frac{dt}{t}\biggr)^{1/2}\biggr\|_{p}.
		\end{aligned}
	\end{equation*}
 Define $g_{a,b,c}: \mathbb{R}\to L_p(\mathcal{M})$ given by 
	\begin{equation*}
		g_{a,b,c}(u)=\chi_{(0,ab^2+c]}(u)T_u(x).
	\end{equation*}
	One can check that
	\begin{equation*}
		M_{t^2s}T_u(x)=M_{t^2s}^\tau(g_{a,b,c})(u),\,\,\,\,\,\,\,\, \forall \ 0<s \le a,\,\,0<t\le b,\,\,0<u\le c.
	\end{equation*}
	Hence we get
	\begin{equation*}
		\begin{aligned}
			\mathrm{I}_{b, a}^p{}&=\frac{1}{c}\int_0^c \biggl\|\biggl(\int_{b^{-1}}^b \bigg|\int_0^a\varphi(\frac{1}{\sqrt{s}})M^\tau_{t^2s}(g_{a,b,c})(u)\frac{ds}{s}\bigg|^2\frac{dt}{t}\biggr)^{1/2}\biggr\|^{p}_{p}du\\
			&\le \frac{1}{c}\int_{-\infty}^\infty \biggl\|\biggl(\int_{b^{-1}}^b \bigg|\int_0^a\varphi(\frac{1}{\sqrt{s}})M^\tau_{t^2s}(g_{a,b,c})(u)\frac{ds}{s}\bigg|^2\frac{dt}{t}\biggr)^{1/2}\biggr\|^{p}_{p}du\\
			&=\frac{1}{c}\biggl\|\biggl(\int_{b^{-1}}^b \bigg|\int_0^a\varphi(\frac{1}{\sqrt{s}})M^\tau_{t^2s}(g_{a,b,c})\frac{ds}{s}\bigg|^2\frac{dt}{t}\biggr)^{1/2}\biggr\|^p_{L_p(\mathbb{R},L_p(\mathcal{M}))}.
		\end{aligned}
	\end{equation*}
This implies that
	\begin{equation}\label{Iequal}
		\begin{aligned}
			\mathrm{I}_{b, a}{}&\le c^{-\frac{1}{p}}\biggl\|\biggl(\int_{b^{-1}}^b \bigg|\int_0^a\varphi(\frac{1}{\sqrt{s}})M^\tau_{t^2s}(g_{a,b,c})\frac{ds}{s}\bigg|^2\frac{dt}{t}\biggr)^{1/2}\biggr\|_{L_p(\mathbb{R},L_p(\mathcal{M}))}\\
			&\le c^{-\frac{1}{p}}\biggl\|\biggl(\int_{b^{-1}}^b \bigg|\int_0^\infty\varphi(\frac{1}{\sqrt{s}})M^\tau_{t^2s}(g_{a,b,c})\frac{ds}{s}\bigg|^2\frac{dt}{t}\biggr)^{1/2}\biggr\|_{L_p(\mathbb{R},L_p(\mathcal{M}))}\\
			&\,\,\,\,\,\,+c^{-\frac{1}{p}}\biggl\|\biggl(\int_{b^{-1}}^b \bigg|\int_a^\infty\varphi(\frac{1}{\sqrt{s}})M^\tau_{t^2s}(g_{a,b,c})\frac{ds}{s}\bigg|^2\frac{dt}{t}\biggr)^{1/2}\biggr\|_{L_p(\mathbb{R},L_p(\mathcal{M}))}.
		\end{aligned}
	\end{equation}
	Let
	\[(M^\tau)^*(g_{a,b,c})=\sup_{v>0}\|M_v^\tau(g_{a,b,c})\|_{p}.\]
	From \cite{AMM} or \cite[Theorem 5.2.5]{UK}, we have
	\begin{equation}\label{Mtao1}
		\|(M^\tau)^*(g_{a,b,c})\|_{L_p(\mathbb{R})}\le p^\prime\|g_{a,b,c}\|_{L_p(\mathbb{R},L_p(\mathcal{M}))}=p^\prime(ab^2+c)^{1/p}.
	\end{equation}
	Thus by \eqref{intphi}, \eqref{Iequal} and \eqref{Mtao1}, we deduce that
	\[\mathrm{I}_{b, a}\le c^{-1/p}\biggl\|\biggl(\int_{b^{-1}}^b \bigg|\int_0^\infty\varphi(\frac{1}{\sqrt{s}})M^\tau_{t^2s}(g_{a,b,c})\frac{ds}{s}\bigg|^2\frac{dt}{t}\biggr)^{1/2}\biggr\|_{L_{p}(\mathbb{R},L_p(\mathcal{M}))}+\frac{(2\ln b)^{1/2}Cp^\prime}{\sqrt{a}}\biggl(\frac{ab^2+c}{c}\biggr)^{1/p}.\]
	Furthermore, by \eqref{step1} one has
	\[\biggl\|\biggl(\int_{b^{-1}}^b \bigg|\int_0^\infty\varphi(\frac{1}{\sqrt{s}})M^\tau_{t^2s}(g_{a,b,c})\frac{ds}{s}\bigg|^2\frac{dt}{t}\biggr)^{1/2}\biggr\|_{L_{p}(\mathbb{R},L_p(\mathcal{M}))}\lesssim p\|g_{a,b,c}\|_{L_p(\mathbb{R},L_p(\mathcal{M}))}=p(ab^2+c)^{1/p},\]
	which implies that
	\begin{equation}\label{Isum}
		\mathrm{I}_{b, a}\lesssim p\biggl(\frac{ab^2+c}{c}\biggr)^{1/p}+\frac{(2\ln b)^{1/2}Cp^\prime}{\sqrt{a}}\biggl(\frac{ab^2+c}{c}\biggr)^{1/p}.
	\end{equation}
	Therefore, from \eqref{IIsum} and \eqref{Isum} we have
	\begin{equation*}
		\begin{aligned}
			\mathrm{I}_{b, a}+\mathrm{II}_{b, a}{}&\lesssim p\biggl(\frac{ab^2+c}{c}\biggr)^{1/p}+\frac{(2\ln b)^{1/2}Cp^\prime}{\sqrt{a}}\biggl(\frac{ab^2+c}{c}\biggr)^{1/p}+\frac{(2\ln b)^{1/2}C}{\sqrt{a}}\\
			&\lesssim p\biggl(\frac{ab^2+c}{c}\biggr)^{1/p}+\frac{(2\ln b)^{1/2}C}{\sqrt{a}}\biggl(1+p^\prime\biggl(\frac{ab^2+c}{c}\biggr)^{1/p}\biggr).
		\end{aligned}
	\end{equation*}
	Let successively $c\to\infty$, $a\to\infty$ and $b\to\infty$, then
	\[\|G_c^{P}(x)\|_{p}\lesssim p.\]
Similarly, in the same way, one can show that
\begin{equation*}
	\|G_r^{P}(x)\|_{p}\lesssim p.
\end{equation*}
Therefore, we conclude that for any $x\in L_p({\mathcal{M}})$
\begin{equation*}
	\|x\|_{p,P}\lesssim p\|x\|_{p}.
\end{equation*}
This yields that $\beta_p\lesssim p$ as $p\to\8$. However, from Corollary \ref{lower} we get that $\beta_p\gtrsim p$ as $p\to\infty$. As a result,
$p$ is the optimal order of $\beta_p$ as $p\to\infty$.

\subsubsection*{Step 5: Duality}	
Let $F(z)=-ze^{-z}$. Let $x\in L_p(\mathcal{M})$ and $y\in L_{p'}(\mathcal{M})$.
It follows from (\ref{ppppp}) that
\begin{equation*}
	[y]_{p',P}\lesssim p^{2}\|y\|_{p'}.
\end{equation*}
Thus there exist two operator-valued functions $u_1,u_2$ such that $u_1(t)+u_2(t)=t\partial P_t(y)=F(t\sqrt{A})(y)$ $(\forall \ t\in\mathbb{R}_+)$, which satisfy
\begin{equation}\label{yy1y2}
	 \|u_1\|_{L_{p'}(\mathcal{M};(L_2(\mathbb{R}_+,\frac{dt}{t}))^c)}+\|u_2\|_{L_{p'}(\mathcal{M};(L_2(\mathbb{R}_+,\frac{dt}{t}))^r)} \lesssim p^{2}\|y\|_{p'}.
\end{equation}
 From \eqref{xfA}, one has
\begin{equation*}
	x=4\int_0^\infty F(t\sqrt{A})F(t\sqrt{A})(x)\frac{dt}{t}.
\end{equation*}
Then by the duality in \eqref{duality1} and \eqref{yy1y2}, we deduce
\begin{equation*}
	\begin{aligned}
		|\langle x,y\rangle|{}&=4\bigg|\int_0^\infty \langle F(t\sqrt{A})(x), F(t\sqrt{A})(y)\rangle \frac{dt}{t}\bigg|\\
		&\le 4\bigg|\int_0^\infty \langle F(t\sqrt{A})(x), u_1(t)\rangle \frac{dt}{t}\bigg|+4\bigg|\int_0^\infty \langle F(t\sqrt{A})(x), u_2(t)\rangle \frac{dt}{t}\bigg|\\
		&\le 4\big(\|G_r^{P}(x)\|_{p}\|u_1\|_{L_{p'}(\mathcal{M};(L_2(\mathbb{R}_+,\frac{dt}{t}))^c)}+\|G_c^{P}(x)\|_{p}\|u_2\|_{L_{p'}(\mathcal{M};(L_2(\mathbb{R}_+,\frac{dt}{t}))^r)}\big)\\
		&\lesssim p^{2}\|x\|_{p,P}\|y\|_{p'}.
	\end{aligned}
\end{equation*}
Hence this implies that
\begin{equation*}
	\|x\|_{p}\lesssim p^{2}\|x\|_{p,P}, \quad \forall \ x\in L_p(\mathcal{M}),
\end{equation*}
and
\begin{equation*}
	\alpha_p\lesssim p^{2}.
\end{equation*}

\bigskip

\section{Estimates of $\alpha_p,\widetilde{\alpha}_p$ and $\beta_p$ where $1<p<2$}\label{est2}
 Recall that the ergodic averages of $\{T_t\}_{t>0}$ are given by
\begin{equation*}
	M_t=\frac{1}{t}\int_0^t T_s ds,\quad \forall \ t>0.
\end{equation*}
The following lemma describes the Col-boundedness and Row-boundedness of $\{M_t\}_{t>0}$.
\begin{lemma}\label{Mtbound}
	Let $1< p<\infty$. The family $\{M_t\}_{t>0}$ is Col-bounded and Row-bounded on $L_p(\mathcal{M})$. Moreover, 
	$$\mathrm{Col}(\{M_t\}_{t>0})\lesssim \max\{p,p'\} \quad \text{and} \quad \mathrm{Row}(\{M_t\}_{t>0})\lesssim \max\{p,p'\}. $$
\end{lemma}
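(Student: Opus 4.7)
The plan is to pass from the ergodic averages $M_t$ to the semigroup $T_s$ via Lemma~\ref{TFbound}, and then bound the Col- and Row-constants of $\{T_s\}_{s>0}$ using the dilation of Theorem~\ref{thm210} together with a semicommutative maximal-type inequality.

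First I would use the change of variable $u=tr$ to write $M_t=\int_0^1 T_{tr}\,dr$. Fixing $t>0$, this exhibits $M_t$ as an element of the class $\mathcal T$ in Lemma~\ref{TFbound}, with $I=(0,1)$, $f\equiv 1$ (so $\int_I|f|\,dr=1$), and $S(r)=T_{tr}$ a continuous selection into $\mathcal F=\{T_s\}_{s>0}$. Since the corresponding constant $C$ in Lemma~\ref{TFbound} equals $1$ uniformly in $t$, both parts of that lemma give
\[
\mathrm{Col}(\{M_t\}_{t>0})\le \mathrm{Col}(\{T_s\}_{s>0})\quad\text{and}\quad \mathrm{Row}(\{M_t\}_{t>0})\le \mathrm{Row}(\{T_s\}_{s>0}),
\]
reducing the proof to showing $\mathrm{Col}(\{T_s\}_{s>0}),\mathrm{Row}(\{T_s\}_{s>0})\lesssim \max\{p,p'\}$.

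Next I would apply Theorem~\ref{thm210} to dilate the semigroup: $T_s=QU_sJ$, where $J\colon L_p(\M)\to L_p(\N)$ and $Q\colon L_p(\N)\to L_p(\M)$ are completely positive contractions and $\{U_s\}_{s>0}$ is a strongly continuous group of completely positive isometries on $L_p(\N)$. Because completely positive contractions tensorize to contractions on the column and row Hilbertian valued spaces $L_p(\,\cdot\,;\ell_2^c)$ and $L_p(\,\cdot\,;\ell_2^r)$, Col- and Row-bounds pass through $J$ and $Q$; hence $\mathrm{Col}(\{T_s\}_{s>0})\le \mathrm{Col}(\{U_s\}_{s>0})$ (and similarly for Row), reducing the problem to a group of isometries.

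For the final step I would imitate the transference scheme of Steps~2--4 in Section~\ref{est1}: given a finite family $\{s_k\}$ and $\{x_k\}\subset L_p(\N)$, use the isometric invariance of $\{U_s\}$ together with an averaging argument over a window $(0,c)$ (and then letting $c\to\infty$) to identify $\{U_{s_k}x_k\}_k$ with the family of translations $\{\tau_{s_k}g\}_k$ applied to an auxiliary function $g\in L_p(\mathbb R,L_p(\N))$ built from the $x_k$. The Col-norm of $\{U_{s_k}x_k\}_k$ is then dominated by the corresponding column-valued norm of $\{\tau_{s_k}g\}_k$ on $L_p(L_\infty(\mathbb R)\otimes\N;\ell_2^c)$, which by the Hardy--Littlewood maximal inequality on $L_p(\mathbb R,L_p(\N))$ (or its noncommutative analogue via Theorem~\ref{singular}) has order $\max\{p,p'\}$. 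The main obstacle is precisely this last step: one must execute the transference carefully so that no additional $p$-factor is accumulated beyond the sharp $\max\{p,p'\}$. As a backup, a Kadison--Schwarz plus duality argument applied to $\sum_k T_{s_k}(|x_k|^2)$ in $L_{p/2}(\M)$, combined with the noncommutative maximal ergodic theorem of Junge--Xu and interpolation with the trivial $L_2$ bound of $1$, should also deliver the $\max\{p,p'\}$ growth.
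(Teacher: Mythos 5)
Your first reduction---appealing to Lemma~\ref{TFbound} with $\mathcal F=\{T_s\}_{s>0}$---already loses the game, because $\{T_s\}_{s>0}$ is \emph{not} Col-bounded in general, and it becomes demonstrably unbounded after your dilation step. The dilation of Theorem~\ref{thm210} replaces $\{T_s\}$ by a strongly continuous group of isometries; modelling on the simplest instance, the translation group $\{\tau_s\}$ on $L_p(\mathbb R)$, one sees the family of a group of positive isometries is not Col-bounded for $p\neq 2$. Indeed, with $f_k=\chi_{[k,k+1]}$ and the translates chosen so that $\tau_{s_k}f_k=\chi_{[0,1]}$ for $k=1,\dots,n$, one has
\[
\Big\|\Big(\sum_{k=1}^n|\tau_{s_k}f_k|^2\Big)^{1/2}\Big\|_p = n^{1/2},
\qquad
\Big\|\Big(\sum_{k=1}^n|f_k|^2\Big)^{1/2}\Big\|_p = n^{1/p},
\]
so the Col-constant is at least $n^{1/2-1/p}\to\infty$ when $p>2$ (and the reverse choice of translates handles $p<2$). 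Consequently Lemma~\ref{TFbound} gives you nothing: it transfers a finite Col-bound from $\mathcal F$ to the averages, but you have no finite Col-bound for $\mathcal F$ to transfer. The averaging in $M_t=\frac1t\int_0^t T_s\,ds$ is not a convenience---it is precisely what produces Col/Row-boundedness, and any proof must work with the ergodic averages directly rather than reducing them away. Equally, after dilation you no longer have a noncommutative symmetric diffusion semigroup (a group of isometries has no decay), so the Stein/Junge--Xu-type pointwise maximal machinery you want to invoke in the transference step is unavailable for $\{U_s\}$ itself.

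Your ``backup'' route is the one that actually works, and it is close to the paper's argument, but as stated it is aimed at the wrong operators. The paper applies the noncommutative maximal ergodic theorem of Junge--Xu (\cite[Theorem 4.5]{JX2}) to the \emph{ergodic averages} $M_t$, obtaining an $L_q(\mathcal M;\ell_\infty)$ bound with constant $O(q^2)$ for $q>2$, then uses the Kadison--Schwarz-type reduction \cite[Lemma 2.4]{GSX} to pass from the $\ell_\infty$ bound to a Col-bound on $L_{q/(q-1)}(\mathcal M;\ell_2^c)$ with one power of $q$ shaved off, dualizes to $L_q(\mathcal M;\ell_2^r)$, and finally interpolates against the trivial $L_2$ bound of $1$ to reach $L_{p'}$ with a constant of order $p'$; symmetric steps handle the remaining cases. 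If you replace ``$\sum_k T_{s_k}(|x_k|^2)$'' by ``$\sum_k M_{t_k}(|x_k|^2)$'' and make the Kadison--Schwarz/$\ell_\infty\!\to\!\ell_2^c$/duality/interpolation scheme precise in this form, you recover the paper's proof; keep in mind the interpolation endpoint $L_2$ is what delivers the linear-in-$\max\{p,p'\}$ growth rather than the quadratic growth coming straight from the maximal ergodic theorem.
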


\begin{proof}
	When $p=2$, for any $t_1,\cdots,t_n>0$ and $x_1,\cdots,x_n\in L_2(\mathcal{M})$, one has
  \begin{equation*}
  	\begin{aligned}
  		\bigg\|\biggl(\sum_{k=1}^n |M_{t_k}(x_k)|^2\biggr)^{1/2}\bigg\|_{2}{}
  	&=\biggl(\sum_{k=1}^n \|M_{t_k}(x_k)\|^2_{2}\biggr)^{1/2}\\
  	&\le \biggl(\sum_{k=1}^n \|x_k\|^2_{2}\biggr)^{1/2}=\bigg\|\biggl(\sum_{k=1}^n |x_k|^2\biggr)^{1/2}\bigg\|_{2}.
  	\end{aligned}
  \end{equation*}
This implies that
 \begin{equation*}
 	\|\{M_{t_k}(x_k)\}_{k=1}^n\|_{L_2(\mathcal{M};\ell_2^c)}\le \|\{x_k\}_{k=1}^n\|_{L_2(\mathcal{M};\ell_2^c)}.
 \end{equation*}
In the same way, we also get
\begin{equation}\label{dual22}
	\|\{M_{t_k}(x_k)\}_{k=1}^n\|_{L_2(\mathcal{M};\ell_2^r)}\le \|\{x_k\}_{k=1}^n\|_{L_2(\mathcal{M};\ell_2^r)}.
\end{equation}

Suppose $1<p<2$, we let $q=2(p'-1)$. Since $q>2$, from \cite[Theorem 4.5]{JX2}, we have
\begin{equation*}
	\|\{M_{t_k}(x_k)\}_{k=1}^n\|_{L_{\frac{q}{q-2}}(\mathcal{M};\ell_\infty)}\lesssim q^2\|\{x_k\}_{k=1}^n\|_{L_{\frac{q}{q-2}}(\mathcal{M};\ell_\infty)}.
\end{equation*}
Note that each $T_t$ is a unital completely positive and selfadjoint map on $\mathcal{M}$. Then by \cite[Lemma 2.4]{GSX}, we obtain
\begin{equation*}
	\|\{M_{t_k}(x_k)\}_{k=1}^n\|_{L_{\frac{q}{q-1}}(\mathcal{M};\ell_2^c)}\lesssim q\|\{x_k\}_{k=1}^n\|_{L_{\frac{q}{q-1}}(\mathcal{M};\ell_2^c)}.
\end{equation*}
By the duality in \eqref{duality2}, one has 
\begin{equation}\label{dualqq}
	\|\{M_{t_k}(x_k)\}_{k=1}^n\|_{L_{q}(\mathcal{M};\ell_2^r)}\lesssim q\|\{x_k\}_{k=1}^n\|_{L_{q}(\mathcal{M};\ell_2^r)}.
\end{equation}
Hence by the complex interpolation in \eqref{compinter}, we get from \eqref{dual22} and \eqref{dualqq} that
\begin{equation}\label{p'p'}
		\|\{M_{t_k}(x_k)\}_{k=1}^n\|_{L_{p'}(\mathcal{M};\ell_2^r)}\lesssim q^{1-\alpha}\|\{x_k\}_{k=1}^n\|_{L_{p'}(\mathcal{M};\ell_2^r)},
\end{equation}
where $\alpha$ satisfies $\frac{1-\alpha}{q}+\frac{\alpha}{2}=\frac{1}{p'}$. Note that $$q^{1-\alpha}\le q=2(p'-1).$$ 
Then we deduce that
\begin{equation*}
	\|\{M_{t_k}(x_k)\}_{k=1}^n\|_{L_{p'}(\mathcal{M};\ell_2^r)}\lesssim p'\|\{x_k\}_{k=1}^n\|_{L_{p'}(\mathcal{M};\ell_2^r)}.
\end{equation*}
By duality again, one has
\begin{equation*}
	\|\{M_{t_k}(x_k)\}_{k=1}^n\|_{L_{p}(\mathcal{M};\ell_2^c)}\lesssim p'\|\{x_k\}_{k=1}^n\|_{L_{p}(\mathcal{M};\ell_2^c)}.
\end{equation*}
Similarly, we obtain
\begin{equation*}
	\|\{M_{t_k}(x_k)\}_{k=1}^n\|_{L_{p'}(\mathcal{M};\ell_2^c)}\lesssim p'\|\{x_k\}_{k=1}^n\|_{L_{p'}(\mathcal{M};\ell_2^c)}
\end{equation*}
and
\begin{equation*}
		\|\{M_{t_k}(x_k)\}_{k=1}^n\|_{L_{p}(\mathcal{M};\ell_2^r)}\lesssim p'\|\{x_k\}_{k=1}^n\|_{L_{p}(\mathcal{M};\ell_2^r)}.
\end{equation*}
This finishes the proof.
\end{proof}

The following proposition shows that the negative generator $\sqrt{A}$ associated with $\{P_t\}_{t>0}$ is Col-sectorial of Col-type $\frac{\pi}{4}$ and Row-sectorial of Row-type $\frac{\pi}{4}$ when $1<p<\infty$.
\begin{proposition}\label{mainlemma}
	Let $1<p<\infty$. Suppose that $\{T_t\}_{t>0}$ is a noncommutative symmetric diffusion semigroup on $\mathcal{M}$ and $\{P_t\}_{t>0}$ is its associated subordinated Poisson semigroup. Then the negative infinitesimal generator $A$ of $\{T_t\}_{t>0}$ is Col-sectorial of Col-type $\frac{\pi}{2}$ and Row-sectorial of Row-type $\frac{\pi}{2}$. More precisely, the family 
	\begin{equation*}
		\{z(z-A)^{-1}:z\in \mathbb{C}\backslash \overline{\Sigma_\theta}\}
	\end{equation*}
    is Col-bounded and Row-bounded on $L_p(\mathcal{M})$ with constant $C_{\theta}\max\{p,p'\}$ for any $\frac{\pi}{2}<\theta<\pi$, where $C_{\theta}$ is a constant depending on $\theta$.
    
    Consequently, 
	$\sqrt{A}$ is Col-sectorial of Col-type $\frac{\pi}{4}$ and Row-sectorial of Row-type $\frac{\pi}{4}$. More precisely, the family 
	\begin{equation*}
		\{z(z-\sqrt{A})^{-1}:z\in \mathbb{C}\backslash \overline{\Sigma_\theta}\}
	\end{equation*}
	is Col-bounded and Row-bounded on $L_p(\mathcal{M})$ with constant $c_{\theta}\max\{p,p'\}$ for any $\frac{\pi}{4}<\theta<\pi$, where $c_{\theta}$ is a constant depending on $\theta$.
\end{proposition}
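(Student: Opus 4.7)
The plan is to reduce the Col/Row-boundedness of the resolvent family of $A$ to the Col/Row-boundedness of the ergodic averages $\{M_t\}_{t>0}$ already obtained in Lemma \ref{Mtbound}, and then lift the result to $\sqrt{A}$ via Lemma \ref{Aalpha}.

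The first step is to rewrite $z(z-A)^{-1}$ in terms of $\{M_t\}_{t>0}$. Fix $\theta \in (\pi/2,\pi)$ and $z \in \mathbb{C}\setminus\overline{\Sigma_\theta}$, so that $\operatorname{Re}(z) < 0$ and the Laplace formula \eqref{Laplace} yields $(z-A)^{-1} = -\int_0^\infty e^{tz} T_t\, dt$. Since $T_t = \frac{d}{dt}(tM_t)$, integration by parts (the boundary terms vanish because $\|M_t\| \le 1$ and $\operatorname{Re}(z)<0$) gives the key identity
\begin{equation*}
z(z-A)^{-1} \;=\; z^2 \int_0^\infty t\, e^{tz}\, M_t\, dt .
\end{equation*}

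The second step is the scalar $L_1$-estimate for the integrand. For $z \in \mathbb{C}\setminus\overline{\Sigma_\theta}$ one has $|\arg z| > \theta > \pi/2$, hence $\operatorname{Re}(z) \le -|z|\,|\cos\theta|$, and therefore
\begin{equation*}
\int_0^\infty |z^2 t\, e^{tz}|\, dt \;=\; \frac{|z|^2}{(\operatorname{Re}(z))^2} \;\le\; \frac{1}{\cos^2\theta} \;=:\; C_\theta .
\end{equation*}
Now Lemma \ref{TFbound} applies to the family $\mathcal{F} = \{M_t\}_{t>0}$ with $f(t) = z^2 t e^{tz}$ and $S(t) = M_t$: combined with Lemma \ref{Mtbound}, it yields
\begin{equation*}
\operatorname{Col}\bigl(\{z(z-A)^{-1}: z\in\mathbb{C}\setminus\overline{\Sigma_\theta}\}\bigr) \;\lesssim\; C_\theta\,\operatorname{Col}(\{M_t\}_{t>0}) \;\lesssim\; C_\theta \max\{p,p'\},
\end{equation*}
and the Row version by the same argument. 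This establishes the Col/Row-sectoriality of $A$ of type $\pi/2$ with the stated constant.

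The third step is to pass from $A$ to $\sqrt{A}$ via Lemma \ref{Aalpha} with $\alpha = 1/2$ and $\omega = \pi/2$ (so $\alpha\omega = \pi/4 < \pi$). Inspecting the proof of that lemma, the decomposition $z(z-A^{1/2})^{-1} = z^2(z^2-A)^{-1} + \varphi(A)$ expresses the resolvent of $\sqrt{A}$ as the sum of a rescaled resolvent of $A$ and an integral against the resolvents of $A$ with an absolutely convergent scalar weight whose $L_1$-norm depends only on $\theta$. Applying Lemma \ref{TFbound} once more, the Col/Row constant of $\sqrt{A}$ is bounded by a $\theta$-dependent multiple of the Col/Row constant of $A$, giving $c_\theta\,\max\{p,p'\}$ as claimed.

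The main technical point to be careful about is the second step: one must justify the integration by parts rigorously on $L_p(\mathcal{M})$ (using strong continuity of $\{T_t\}$ and the bound $\|M_t\| \le 1$ to kill the boundary at $t=0$ and at $t=\infty$) and then verify that the constant extracted from Lemma \ref{TFbound} and Lemma \ref{Aalpha} is genuinely independent of $p$, so that the $\max\{p,p'\}$ factor comes solely from Lemma \ref{Mtbound} and is not inflated when one passes to fractional powers.
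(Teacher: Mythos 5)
Your proof is correct and follows essentially the same route as the paper: integrate by parts in the Laplace formula to express $z(z-A)^{-1}$ as an integral of $\{M_t\}_{t>0}$ against an $L_1$ scalar kernel with norm $\lesssim 1/\cos^2\theta$, apply Lemma \ref{TFbound} together with Lemma \ref{Mtbound} to get the Col/Row bound with constant $C_\theta\max\{p,p'\}$, and then pass to $\sqrt{A}$ via Lemma \ref{Aalpha} with $\alpha=1/2$. The only cosmetic difference is that the paper first normalizes by $(\operatorname{Re}z)^2/|z|$ before rescaling, whereas you carry the full $z^2$ weight through directly; your closing remarks about justifying the integration by parts and the $p$-independence of the $\theta$-constants are the right points to be careful about, and both hold as you indicate.
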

\begin{proof}
	Let $z\in\mathbb{C}$ with $\mathrm{Re}z<0$. From the Laplace formula in \eqref{Laplace} we calculate that 
	\begin{equation*}
		\begin{aligned}
			(z-A)^{-1}=-\int_0^\infty e^{tz}T_t dt=-\int_0^\infty e^{tz}(tM_t)' dt=\int_0^\infty tze^{tz}M_t dt.
		\end{aligned}
	\end{equation*}
In Lemma \ref{Mtbound}, we have shown that $\{M_t\}_{t>0}$ is Col-bounded and Row-bounded on $L_p(\mathcal{M})$. Note that 
\begin{equation*}
	\begin{aligned}
		\int_0^\infty \bigg|\frac{(\mathrm{Re}z)^2}{|z|}zte^{tz}\bigg|dt=1.
	\end{aligned}
\end{equation*}
From Lemma \ref{TFbound}, this implies that $\big\{ \frac{(\mathrm{Re}z)^2}{|z|} (z-A)^{-1}:\mathrm{Re}z<0 \big\}$ is also Col-bounded and Row-bounded on $L_p(\mathcal{M})$. Assume that $\frac{\pi}{2}<\theta<\pi$. Notice that $$\frac{|z|}{\mathrm{Re}(z)}\le \frac{1}{\mathrm{cos}(\pi-\theta)},\quad \forall \ z\in \mathbb{C}\backslash \overline{\Sigma_\theta}.$$ Thus for any finite numbers $z_1,\cdots,z_n$ in $\mathbb{C}\backslash \overline{\Sigma_\theta}$ and $x_1,\cdots,x_n$ in $L_p(\mathcal{M})$, we have
\begin{equation*}
	\bigg\| \biggl(\sum_{k=1}^n |z_k(z_k-A)^{-1}(x_k)|^2 \biggr)^{1/2}\bigg\|_{p}\le \frac{1}{\mathrm{cos}^2(\pi-\theta)}\bigg\| \biggl(\sum_{k=1}^n \bigg|\frac{(\mathrm{Re}z_k)^2}{|z_k|}(z_k-A)^{-1}(x_k)\bigg|^2 \biggr)^{1/2}\bigg\|_{p}.
\end{equation*}
Hence $\{ z(z-A)^{-1}:z\in \mathbb{C}\backslash \overline{\Sigma_\theta}\}$ is Col-bounded on $L_p(\mathcal{M})$, and is also Row-bounded in the same way. The second part on $\sqrt{A}$ is derived by Lemma \ref{Aalpha} directly if we let $\alpha=\frac{1}{2}$.

\end{proof}

Now we begin to estimate $\alpha_p$ and $\beta_p$ for $1<p<2$.
\begin{proof}[Estimates of $\alpha_p$ and $\beta_p$]
	Let $1<p<2$. By Proposition \ref{mainlemma}, $\sqrt{A}$ is Col-sectorial of Col-type $\frac{\pi}{4}$ and Row-sectorial of Row-type $\frac{\pi}{4}$. Let $\xi=\frac{3\pi}{8}$.
	 Let $F(z)=ze^{-z}$, $G(z)=4F(z)$ and $\widetilde{G}(z)=\overline{G(\overline{z})}$. It is easy to check that $F,G\in H_0^\infty(\Sigma_{\xi})\backslash \{0\}$ and $$\int_0^\infty G(t)F(t)dt=1.$$ 
 Hence repeating the proof of \cite[Theorem 7.8]{JX}, we get
	\begin{equation}\label{xQcQr}
		\|x\|_{p,P}\le 2\max\{\|Q_c\|,\|Q_r\|\}[x]_{p,P},
	\end{equation}
where $Q_c$ and $Q_r$ are two operators defined as
\begin{equation*}
	\begin{aligned}
		Q_c: L_p(\mathcal{M};(L_2(\mathbb{R}_+,\frac{dt}{t}))^c)&\to L_p(\mathcal{M};(L_2(\mathbb{R}_+,\frac{dt}{t}))^c)\\
		\{x_t\}_{t>0}&\mapsto \biggl\{\int_0^\infty F(sA)G(tA)x_t
		\frac{dt}{t}\biggr\}_{s>0}
	\end{aligned}
\end{equation*}
and
\begin{equation*}
	\begin{aligned}
		Q_r: L_p(\mathcal{M};(L_2(\mathbb{R}_+,\frac{dt}{t}))^r)&\to L_p(\mathcal{M};(L_2(\mathbb{R}_+,\frac{dt}{t}))^r)\\
		\{x_t\}_{t>0}&\mapsto \biggl\{\int_0^\infty F(sA)G(tA)x_t
		\frac{dt}{t}\biggr\}_{s>0}.
	\end{aligned}
\end{equation*}
	It suffices to estimate $\|Q_c\|$ and $\|Q_r\|$. Let $\frac{\pi}{4}<\eta<\xi$. Let $\Gamma_\eta$ be the oriented contour defined by $\Gamma_\eta=\{f_\eta(t):t\in\mathbb{R}\}$, where
	\begin{equation*}
		\begin{aligned}
			f_\eta(t)=\begin{cases}
				-te^{\mathrm{i}\eta},& t\in\mathbb{R}_-,\\
				te^{-\mathrm{i}\eta},& t\in\mathbb{R}_+.
			\end{cases}
		\end{aligned}
	\end{equation*}
	Thus $\Gamma_\eta$ is the boundary of $\Sigma_\eta$ oriented counterclockwise. 
	Then let 
	\begin{equation*}
		\begin{aligned}
			Q_{\Phi}:L_p(\mathcal{M};(L_2(\mathbb{R},\bigg|\frac{dt}{t}\bigg|))^c)&\to L_p(\mathcal{M};(L_2(\mathbb{R},\bigg|\frac{dt}{t}\bigg|))^c)\\
			\{x_t\}_{t\in\mathbb{R}}&\mapsto \biggl\{\frac{f_\eta(t)(f_{\eta}(t)-\sqrt{A})^{-1}x_t}{2\pi\mathrm{i}}\biggr\}_{t\in\mathbb{R}}.
		\end{aligned}
	\end{equation*}
	From the proof of \cite[Theorem 4.14]{JX}, we know that 
	\begin{equation}\label{Qc}
		\|Q_c\|\le K_1K_2\|Q_{\Phi}\| \quad \text{and} \quad \|Q_{\Phi}\|\le \mathrm{Col}(\mathcal{R}),
	\end{equation}
	where $$K_1=\int_{\Gamma_\eta}|F(z)||\frac{dz}{z}|,\quad K_2=\int_{\Gamma_\eta}|G(z)||\frac{dz}{z}|$$ and 
	\begin{equation*}
		\mathcal{R}=\bigg\{\frac{1}{\mu(I)}\int_I f_\eta(t)(f_{\eta}(t)-\sqrt{A})^{-1} d\mu(t): I\subset\mathbb{R},0<\mu(I)<\infty\bigg\}.
	\end{equation*}
	Here $d\mu(t)=|\frac{dt}{t}|$. Notice that $\frac{\pi}{4}<\eta<\xi$. Using Lemma \ref{TFbound} and Proposition \ref{mainlemma}, one has
	\begin{equation}\label{QPhi}
		\|Q_{\Phi}\|\le \mathrm{Col}(\mathcal{R})\le \mathrm{Col}(\{z(z-\sqrt{A})^{-1}:z\in\Gamma_\eta\})\lesssim p'.
	\end{equation}
	Besides, by direct calculations, one has
	\begin{equation*}
		\begin{aligned}
			K_1=\int_{\Gamma_\eta}|F(z)|\bigg|\frac{dz}{z}\bigg|=2\int_0^\infty e^{-t\mathrm{cos}\eta}dt\lesssim \frac{1}{\mathrm{cos}\eta}\le \frac{1}{\mathrm{cos}(3\pi/8)}
		\end{aligned}
	\end{equation*}
	and $K_2=4K_1$. Hence from \eqref{Qc} and \eqref{QPhi} we have
	\begin{equation*}
		\|Q_c\|\lesssim p'.
	\end{equation*}
	Similarly,
	\begin{equation*}
		\|Q_r\|\lesssim p'.
	\end{equation*}
	Therefore, by \eqref{ppppp} and \eqref{xQcQr}, we obtain
	\begin{equation}\label{xppt}
		\|x\|_{p,P}\lesssim p'^{3}\|x\|_{p}
	\end{equation}
and 
\begin{equation*}
	\beta_p\lesssim p'^{3}.
\end{equation*}

    Finally we estimate $\alpha_p$ for $1<p<2$ by duality. Let $x\in L_p(\mathcal{M})$ and $y\in L_{p'}(\mathcal{M})$. 
    It has been proved in Section \ref{est1} that
    \begin{equation}\label{yy1y2y3}
    	\|y\|_{p',P}\le \beta_{p'}\|y\|_{p'}.
    \end{equation}
    Let $F(z)=-ze^{-z}$. From \eqref{xfA}, we see
    \begin{equation*}
    	x=4\int_0^\infty F(t\sqrt{A})F(t\sqrt{A})(x)\frac{dt}{t}.
    \end{equation*}
    Then by the duality in \eqref{duality1} and \eqref{yy1y2y3}, for any decomposition $x=x_1+x_2$ one has
    \begin{equation*}
    	\begin{aligned}
    		|\langle x,y\rangle|{}&=4\bigg|\int_0^\infty \langle F(t\sqrt{A})(x), F(t\sqrt{A})(y)\rangle \frac{dt}{t}\bigg|\\
    		&\le 4\bigg|\int_0^\infty \langle F(t\sqrt{A})(x_1), F(t\sqrt{A})(y)\rangle \frac{dt}{t}\bigg|+4\bigg|\int_0^\infty \langle F(t\sqrt{A})(x_2), F(t\sqrt{A})(y)\rangle \frac{dt}{t}\bigg|\\
    		&\le 4\big(\|G_c^{P}(x_1)\|_{p}\|G_r^{P}(y)\|_{p'}+\|G_r^{P}(x_2)\|_{p}\|G_c^{P}(y)\|_{p'}\big)\\
    		&\le 4\beta_{p'}\big(\|G_c^{P}(x_1)\|_{p}+\|G_r^{P}(x_2)\|_{p}\big)\|y\|_{p'}.
    	\end{aligned}
    \end{equation*}
    Hence we have
    \begin{equation*}
    	\|x\|_{p}\le 4\beta_{p'}\|x\|_{p,P},
    \end{equation*}
which implies that 
\begin{equation*}
	\alpha_p\lesssim p'.
\end{equation*}
The proof is completed.
\end{proof}

Finally we prove Corollary \ref{widealpha}.
\begin{proof}[Proof of Corollary \ref{widealpha}]
        For any two functions $u_1,u_2$ with $u_1(t)+u_2(t)=t\partial P_t(x)=F(t\sqrt{A})(x)$ for any $t\in\mathbb{R}_+$, by the duality in \eqref{duality1} and \eqref{yy1y2y3}, we estimate
     \begin{equation*}
     	\begin{aligned}
     		|\langle x,y\rangle|{}&=4\bigg|\int_0^\infty \langle F(t\sqrt{A})(x), F(t\sqrt{A})(y)\rangle \frac{dt}{t}\bigg|\\
     		&\le 4\bigg|\int_0^\infty \langle u_1(t), F(t\sqrt{A})(y)\rangle \frac{dt}{t}\bigg|+4\bigg|\int_0^\infty \langle u_2(t), F(t\sqrt{A})(y)\rangle \frac{dt}{t}\bigg|\\
     		&\le 4\big(\|u_1\|_{L_p(\mathcal{M};(L_2(\mathbb{R}_+,\frac{dt}{t}))^c)}\|G_r^{P}(y)\|_{p'}+\|u_2\|_{L_p(\mathcal{M};(L_2(\mathbb{R}_+,\frac{dt}{t}))^r)}\|G_c^{P}(y)\|_{p'}\big)\\
     		&\le 4\beta_{p'}\big(\|u_1\|_{L_p(\mathcal{M};(L_2(\mathbb{R}_+,\frac{dt}{t}))^c)}+\|u_2\|_{L_p(\mathcal{M};(L_2(\mathbb{R}_+,\frac{dt}{t}))^r)}\big)\|y\|_{p'}.
     	\end{aligned}
     \end{equation*}
     Therefore, 
     \begin{equation*}
     	\|x\|_{p}\le 4\beta_{p'}[x]_{p,P}.
     \end{equation*}
     We deduce that 
     \begin{equation*}
     	\widetilde{\alpha}_p\lesssim p'.
     \end{equation*}  
\end{proof}

{\textbf{Acknowledgments.}} We thank Professor Quanhua Xu for proposing to us the subject of this article.
This work is partially supported by the French ANR project (No. ANR-19-CE40-0002).

\bibliographystyle{myrefstyle}
\bibliography{semiref999}

\end{document}